\newtheorem{theorem}{Theorem}[section]
\newtheorem{corollary}[theorem]{Corollary}
\newtheorem{lemma}[theorem]{Lemma}
\newtheorem{prop}[theorem]{Proposition}
\newtheorem{conj}[theorem]{Conjecture}
\newtheorem{question}[theorem]{Question}
\newtheorem{alphthm}{Theorem}
\theoremstyle{definition}
\newtheorem{definition}[theorem]{Definition}
\newtheorem{cexample}[theorem]{Counterexample}
\newtheorem{rmk}[theorem]{Remark}
\theoremstyle{remark}
\newtheorem{remark}[theorem]{Remark}
\newcommand{\vecx}{\underline{x}}
\newcommand{\vecu}{\underline{u}}
\newcommand{\m}{\mathfrak{m}}
\newcommand{\oM}{\overline{M}}
\newcommand{\Tor}{\text{Tor}}
\newcommand{\n}{\mathfrak{n}}
\begin{document}

\title[Ulrich modules and weakly lim Ulrich sequences do not always exist]{Ulrich modules and weakly lim Ulrich sequences\\ do not always exist}
\author{Farrah C. Yhee}
\address{}
\email{farrah.yhee@gmail.com}
\urladdr{}

\thanks{}


\makeatletter
  \hypersetup{
    pdfsubject=\@subjclass,pdfkeywords=\@keywords
  }
\makeatother

\begin{abstract}
The existence of Ulrich modules for local domains has been a difficult and elusive open question. For over thirty years, it was unknown whether local domains always have Ulrich modules. In this paper, we answer the question of existence for both Ulrich modules and weakly lim Ulrich sequences -- a weaker notion recently introduced by Ma -- in the negative. We construct many local domains in all dimensions $d \geq 2$ that do not have any Ulrich modules. Moreover, we show that when $d = 2$, these local domains do not have weakly lim Ulrich sequences. A key insight in our proofs is the classification of MCM $R$-modules via the $S_2$-ification of $R$. For local domains of dimension $2$, we show that the existence of weakly lim Ulrich sequences implies the existence of lim Ulrich sequences.

\end{abstract}

\maketitle

\section{Introduction}
We provide the first known counterexamples to the existence of Ulrich modules and weakly lim Ulrich sequences for local domains. In the case of Ulrich modules, this answers a difficult question that has been open for over three decades. In the case of weakly lim Ulrich sequences, this resolves the question of existence in a rather surprising turn of events, given that lim Cohen--Macaulay sequences exist for complete local domains of positive characteristic.  See \cite{BHM} \cite{M}.

Ulrich modules were introduced by Bernd Ulrich in 1984 as a means to study the Gorenstein property of Cohen-Macaulay rings \cite{U}. Since then, the theory of Ulrich modules has become a very active area of research in both commutative algebra and algebraic geometry. Ulrich modules have many powerful and broad applications ranging from the original purpose of giving a criteria for when a local Cohen-Macaulay ring is Gorenstein \cite{U} to new methods of finding Chow forms of a variety \cite{ESW} to longstanding open conjectures in multiplicity theory. 

One of the major applications of Ulrich modules is Lech's conjecture on Hilbert--Samuel multiplicities. More specifically, the existence of Ulrich modules for complete local domains implies the following conjecture:

\begin{conj} [Lech's Conjecture \cite{L1}] \label{Lech}
Let $\varphi: (R,\m,k) \rightarrow (S,\n,l)$ be a flat local map between local rings. Then $e_{\m}(R) \leq e_{\n}(S)$.
\end{conj}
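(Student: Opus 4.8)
Lech's Conjecture is open in general -- it is a theorem of Lech for $\dim R \le 2$ -- so what follows is not a proof but the conditional strategy that makes Ulrich modules relevant to it, in the spirit of \cite{M}; the point, for the present paper, is that the strategy rests on an existence hypothesis that we will show can fail.

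The plan begins with the standard reductions. Replacing $\varphi$ by $\hat R \to \hat S$ alters neither $e_{\m}(R)$ nor $e_{\n}(S)$ and preserves flatness, so we may assume $R$ and $S$ are complete; replacing $R$ by $R[T]_{\m R[T]}$ (and $S$ accordingly) we may assume the residue field of $R$ is infinite; and from the associativity formulas $e_{\m}(R)=\sum_{\mathfrak p}\ell_{R_{\mathfrak p}}(R_{\mathfrak p})\,e_{\m}(R/\mathfrak p)$ and $e_{\n}(S)=\sum_{\mathfrak p}\ell_{R_{\mathfrak p}}(R_{\mathfrak p})\,e_{\n}(S/\mathfrak p S)$ (sums over the minimal primes $\mathfrak p$ of $R$ of maximal dimension; the second uses going-down) together with the flatness of each base change $R/\mathfrak p\to S/\mathfrak p S$, it suffices to prove $e_{\m}(R)\le e_{\n}(S)$ when $R$ is a complete local domain.

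Now invoke the existence hypothesis: let $M$ be an Ulrich module for $R$, i.e.\ a maximal Cohen--Macaulay $R$-module with $\mu_R(M)=e_{\m}(M)=r\cdot e_{\m}(R)$ where $r=\mathrm{rank}_R M$. Since the residue field is infinite, choose a minimal reduction $\vecx=x_1,\dots,x_d$ of $\m$ with $\m M=\vecx M$; because $M$ is maximal Cohen--Macaulay, $\vecx$ is a regular sequence on $M$. Set $N:=M\otimes_R S$. Flat base change gives $\mu_S(N)=\mu_R(M)$ and makes $\vecx$ a regular sequence on $N$ with $N/\vecx N\cong (S/\m S)^{\mu_R(M)}$; and since $R$ is a domain, every minimal prime of $S$ contracts to $(0)$, so $N$ is free of rank $r$ at every minimal prime of $S$ and the associativity formula gives $e_{\n}(N)=r\cdot e_{\n}(S)$. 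Hence $e_{\m}(R)\le e_{\n}(S)$ is equivalent to $\mu_S(N)\le e_{\n}(N)$. The crux is the identity
\[
 e_{\n}(M\otimes_R S) \;=\; e_{\m}(M)\cdot e_{\n}(S/\m S).
\]
I would prove it by a mixed-multiplicity computation: being Ulrich means exactly that $\mathrm{gr}_{\m}(M)$ is Cohen--Macaulay with the minimal Hilbert function $e_{\m}(M)\binom{t+d-1}{d-1}$, this ``linear'' structure survives tensoring with $S$, and refining the $\n$-adic filtration of $N$ through the $\m S$-adic filtration lets one read off the leading term of the Hilbert polynomial of $N$. Granting the identity, $e_{\n}(N)=e_{\m}(M)\,e_{\n}(S/\m S)\ge e_{\m}(M)=\mu_S(N)$ since $e_{\n}(S/\m S)\ge 1$, and more precisely $e_{\m}(R)\le e_{\m}(R)\,e_{\n}(S/\m S)\le e_{\n}(S)$.

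The main obstacle is the input, not the deduction. The argument just sketched -- or the same argument with $M$ replaced by a weakly lim Ulrich sequence $\{M_i\}$ and the equalities made asymptotic in $i$ -- reduces Lech's Conjecture to the existence, for every complete local domain, of an Ulrich module or at least of a weakly lim Ulrich sequence. The present paper constructs complete local domains in every dimension $d\ge 2$ with no Ulrich module, and in dimension $2$ with no weakly lim Ulrich sequence, so this route to Lech's Conjecture does not go through unconditionally, and I would not attempt to settle the conjecture here.
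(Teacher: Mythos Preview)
The paper states Lech's Conjecture as an open problem and does not prove it; it only remarks that the existence of Ulrich modules for complete local domains would imply it. You correctly recognize this and sensibly decline to attempt a proof of the conjecture itself, so in that respect your response matches the paper.

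Your sketch of the conditional implication, however, rests on the displayed identity
\[
e_{\n}(M\otimes_R S)\;=\;e_{\m}(M)\cdot e_{\n}(S/\m S),
\]
and this identity is false. Take $R=k[\![t]\!]$, so that $M=R$ is Ulrich with $e_{\m}(M)=1$; let $S=k[\![x]\!]$ with $\varphi(t)=x^{2}$. Then $N=S$ has $e_{\n}(N)=1$, while $S/\m S\cong k[x]/(x^{2})$ has $e_{\n}(S/\m S)=\ell_S(S/\m S)=2$, so your identity reads $1=2$. More structurally, combining your identity with $e_{\n}(N)=r\,e_{\n}(S)$ would force the \emph{equality} $e_{\n}(S)=e_{\m}(R)\cdot e_{\n}(S/\m S)$ whenever $R$ admits an Ulrich module --- a multiplicativity statement that already fails for ramified flat maps out of regular rings. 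The ``mixed-multiplicity computation'' you gesture at cannot establish a formula that does not hold; the argument in the literature (see \cite{M}) reaches $e_{\m}(R)\le e_{\n}(S)$ by a different inequality and does not pass through this identity.
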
 

While philosophically reasonable to expect,  Lech's conjecture has been open in virtually all cases for over sixty years. During the 1960's, Lech proved the conjecture in the case where $\text{dim}(R) = 2$ and the case where the closed fiber $S/ \m S$ is a strict complete intersection \cite{L1}, \cite{L2}. These remained the best partial results for the next three decades. The major breakthroughs that followed utilized either Ulrich modules or sequences of modules $\{M_n\}$ that better approximate the Ulrich condition as $n$ gets larger. For example, in the 1990s, Herzog, Ulrich, and Backelin proved that strict complete intersection rings have Ulrich modules \cite{HUB} which implies that Lech holds for strict complete intersection rings. In 2020, Ma proved Lech's conjecture for standard graded domains over perfect fields via the existence of weakly lim Ulrich sequences \cite{M}.

Recently, in \cite{IMW}, Iyengar, Ma, and Walker introduced two new multiplicity conjectures: 

\begin{conj} [\cite{IMW}]
\label{conj:IMW1}
For a local ring $R$, every nonzero R-module M of finite projective dimension satisfies $\ell_R(M) \geq e(R)$. 
\end{conj}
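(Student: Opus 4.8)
To attack Conjecture~\ref{conj:IMW1}, which is wide open, I would first isolate the real content: if $\dim M \ge 1$ then $\ell_R(M) = \infty$ and there is nothing to prove, while $M = 0$ is excluded, so assume $0 \ne M$ has finite length. Now record a structural reduction. By Auslander--Buchsbaum, $\operatorname{pd}_R M = \operatorname{depth} R - \operatorname{depth} M = \operatorname{depth} R$, since a nonzero finite-length module has depth $0$; on the other hand the New Intersection Theorem, applied to the minimal free resolution of $M$, forces $\dim R \le \operatorname{pd}_R M$. Hence $\operatorname{depth} R = \dim R =: d$, so $R$ is Cohen--Macaulay and $\operatorname{pd}_R M = d$; the case $d = 0$ is immediate, since then $M$ is free and $\ell_R(M) = (\operatorname{rank} M)\,\ell_R(R) \ge \ell_R(R) = e(R)$, so assume $d \ge 1$. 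It therefore suffices to take $(R,\m,k)$ Cohen--Macaulay of dimension $d \ge 1$ with a minimal free resolution $0 \to R^{\beta_d} \to \cdots \to R^{\beta_0} \to M \to 0$ of a finite-length module $M$, and to prove $\ell_R(M) \ge e_{\m}(R)$.

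Next I would pass from $R$ to $R(T) = R[T]_{\m R[T]}$ and from $M$ to $M \otimes_R R(T)$, which changes neither $\ell$, nor $e$, nor finiteness of projective dimension, so we may assume $k$ infinite and fix a system of parameters $\underline{x} = x_1,\dots,x_d$ generating a minimal reduction of $\m$; then $e_\m(R) = e(\underline{x};R) = \ell_R(R/\underline{x}R)$ because $R$ is Cohen--Macaulay. Since $\underline{x}$ is an $R$-regular sequence, the Koszul complex computes $\operatorname{Tor}^R_\bullet(R/\underline{x}R, M) = H_\bullet(\underline{x};M)$, and $\dim M < d$ gives $\sum_i (-1)^i \ell_R(H_i(\underline{x};M)) = \chi(\underline{x};M) = e(\underline{x};M) = 0$, i.e.\ $\ell_R(M/\underline{x}M) = \sum_{i\ge 1}(-1)^{i-1}\ell_R(H_i(\underline{x};M))$. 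Since $\ell_R(M) \ge \ell_R(M/\underline{x}M)$, it would be enough either to bound $\ell_R(M/\underline{x}M)$ below by $\ell_R(R/\underline{x}R)$, or, more generally, to recover $e_\m(R)$ from the full collection of Koszul-homology lengths $\ell_R(H_i(\underline{x};M))$. Controlling these lengths in terms of $e_\m(R)$ -- essentially a statement about the local Chern character (Dutta multiplicity) of $M$ -- is the analytic crux where I expect the real difficulty.

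To get past that crux I would bring in a balanced big Cohen--Macaulay $R$-algebra $B$, which exists in all characteristics. Since $\operatorname{pd}_R M = d$, each $F_i \otimes_R B$ has $\m$-depth $d \ge i$, and each nonzero $\operatorname{Tor}^R_i(M,B)$ is killed by a power of $\m$, hence has depth $0$; so the acyclicity lemma gives $\operatorname{Tor}^R_{>0}(M,B) = 0$ and $M \otimes_R B$ acquires the finite $B$-free resolution $F_\bullet \otimes_R B$. One then tries to run the multiplicity comparison over $B$ by reducing modulo a $B$-regular system of parameters, the obstacle being that $B$ is non-Noetherian, $M \otimes_R B$ need not behave like a Cohen--Macaulay $B$-module, and colengths over $B$ must be converted into honest lengths over $R$. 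The same pairing can be attempted against a (weakly) lim Ulrich sequence $\{N_n\}$ in place of $B$: finite projective dimension of $M$ makes $\ell_R(\operatorname{Tor}^R_i(M,N_n))$ asymptotically negligible for $i > 0$, so the relevant Euler characteristic reduces to $\ell_R(M \otimes_R N_n)$, and the (weak) lim Ulrich inequalities together with the elementary bound $\ell_R(M \otimes_R N_n) \le \ell_R(M)\,\mu(N_n)$ pin this quantity against $e_\m(R)\,\mu(N_n)$ up to $o(\mu(N_n))$; dividing by $\mu(N_n)$ and letting $n \to \infty$ yields $\ell_R(M) \ge e_\m(R)$.

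The hard part is exactly the passage that none of these routes fully supplies: moving from the flat-but-non-Noetherian world of $B$, or the asymptotic world of $\{N_n\}$, back to an honest length inequality over $R$. Moreover, the main results of this paper show that lim Ulrich and even weakly lim Ulrich sequences need not exist for local domains, so the lim Ulrich route cannot be expected to settle Conjecture~\ref{conj:IMW1} in general; a proof in full generality would seem to require either a direct handle on the local Chern character (equivalently, the Dutta multiplicity) of finite-length modules of finite projective dimension, or genuinely new Euler-characteristic vanishing results over big Cohen--Macaulay algebras in mixed characteristic.
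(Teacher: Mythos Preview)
The statement you are addressing is Conjecture~\ref{conj:IMW1}, which the paper records as an \emph{open conjecture} due to Iyengar, Ma, and Walker; the paper does not prove it and offers no argument for it. So there is no ``paper's own proof'' to compare against, and your write-up should not be read as a proof either --- which, to your credit, you acknowledge explicitly in the final paragraph.

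Your reductions are sound: the restriction to finite-length $M$, the Cohen--Macaulay reduction via Auslander--Buchsbaum together with the New Intersection Theorem, the passage to infinite residue field, and the reformulation in terms of Koszul homology are all standard and correct. The sketch of the lim Ulrich approach is essentially the mechanism that \cite{IMW} uses to establish the conjecture in the cases where (weakly) lim Ulrich sequences are available, and you correctly observe that the present paper's counterexamples show this route cannot work in general. The big Cohen--Macaulay algebra suggestion is plausible in spirit but, as you say, the non-Noetherian length bookkeeping is the genuine obstruction, and no one has made that work.

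In short: there is no gap to name because there is no claimed proof; what you have written is a reasonable survey of the known reductions and the limits of current technology, consistent with the conjecture's open status.
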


\begin{conj} [\cite{IMW}]
For a local ring $R$, every short complex $F$ supported on the maximal ideal satisfies $\chi_{\infty}(F) \geq e(R),$ where $\chi_{\infty}(F)$ is the Dutta multiplicity.
\end{conj}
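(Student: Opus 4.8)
The plan is to deduce the inequality from the existence of a \emph{lim Ulrich sequence} for $R$, reducing first to a complete local domain, then to positive characteristic, and then invoking the localized Riemann--Roch formalism of \cite{IMW}. I would begin by reducing to the case that $R$ is a complete local domain: passing to $\widehat R$ alters neither $e(R)$ nor $\chi_\infty(F)$ (the Dutta multiplicity is read off from $\widehat R$, and $\widehat R \otimes_R F$ is again a short complex supported on the maximal ideal), and replacing $\widehat R$ by $\widehat R/\mathfrak p$ for a minimal prime $\mathfrak p$ with $\dim \widehat R/\mathfrak p = \dim R$ only decreases $e(R)$ while leaving the Dutta multiplicity of the induced complex unchanged. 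Because $\chi_\infty$ is governed by characteristic-$p$ behaviour -- indeed, in any characteristic it is computed by the intersection-theoretic formula below -- one then descends, in the equal-characteristic case, to characteristic $p > 0$ by a standard spreading-out and specialization argument, presenting $R$ and the differentials of $F$ over a finitely generated $\mathbb Z$-subalgebra and passing to a suitable closed fiber, while checking that the relevant multiplicities and Euler characteristics are preserved; the mixed-characteristic case will be handled separately.

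Assume now that $R$ is a complete local domain of characteristic $p > 0$, and let $\{M_n\}$ be a lim Ulrich sequence for $R$, which exists in this setting. The computational engine is the local Riemann--Roch identity
\[
\sum_i (-1)^i \ell_R\bigl( H_i(F \otimes_R M_n) \bigr) \;=\; \operatorname{ch}_{\m}(F) \cap \tau(M_n),
\]
where $\operatorname{ch}_{\m}(F)$ is the localized Chern character of $F$ and $\tau(M_n)$ is the Todd class of $M_n$ in the rational Chow group of $R$; since $R$ is a domain, the top-dimensional component of $\tau(M_n)$ is $\operatorname{rank}(M_n)\,[\operatorname{Spec} R]$, and its pairing with $\operatorname{ch}_{\m}(F)$ equals $\operatorname{rank}(M_n)\,\chi_\infty(F)$. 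The defining properties of a lim Ulrich sequence are precisely what is needed to show that the contributions of the lower-dimensional components of $\tau(M_n)$, and of the higher homology modules $H_i(F \otimes_R M_n)$ for $i \ge 1$, are $o(\operatorname{rank} M_n)$. At the same time, for a minimal short complex $H_0(F \otimes_R M_n)$ surjects onto $(F_0 \otimes_R M_n)/\m(F_0 \otimes_R M_n)$, so its length is at least $\operatorname{rank}(F_0)\,\ell_R(M_n/\m M_n)$; combined with the asymptotic Ulrich equality $\ell_R(M_n/\m M_n) = (1-o(1))\,e_{\m}(M_n) = (1-o(1))\,\operatorname{rank}(M_n)\,e(R)$, this forces $\sum_i (-1)^i \ell_R(H_i(F \otimes_R M_n)) \ge (1-o(1))\,\operatorname{rank}(M_n)\,e(R)$. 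Combining the two estimates, dividing by $\operatorname{rank}(M_n)$, and letting $n \to \infty$ yields $\chi_\infty(F) \ge e(R)$. Specializing $F$ to a minimal free resolution of a finite-length module of finite projective dimension connects the statement to Conjecture \ref{conj:IMW1}.

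I expect the chief obstacle to be twofold. Even in positive characteristic, proving uniformly in $n$ the $o(\operatorname{rank} M_n)$ bounds on the higher homology $H_i(F \otimes_R M_n)$ and on the lower-dimensional Todd contributions is delicate: this is exactly where the quantitative content of the lim Ulrich hypothesis must be pushed hardest. More seriously, the existence of lim Ulrich sequences is far from automatic -- in equal characteristic zero and in mixed characteristic no general construction is known, and the counterexamples produced in this paper show that even \emph{weakly lim Ulrich sequences} can fail to exist -- so the reduction steps cannot simply relay the problem to a favourable setting, and an unconditional proof will require genuinely new input beyond the Ulrich-module toolkit; for the mixed-characteristic case in particular one would substitute perfectoid big Cohen--Macaulay algebras for a literal lim Ulrich sequence and re-establish the error estimates in that language.
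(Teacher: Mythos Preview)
The paper does not prove this statement: it is recorded as an open \emph{conjecture} from \cite{IMW}, with no proof given or claimed. There is therefore no ``paper's own proof'' to compare your proposal against; the task you have set yourself is to prove a genuinely open problem, not to reconstruct an argument from the text.

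As for the strategy itself, you have already diagnosed its fatal gap in your final paragraph. The entire argument is conditional on the existence of a lim Ulrich sequence for the complete local domain to which you reduce, and the central point of this paper is precisely that such sequences --- even weakly lim Ulrich sequences --- do \emph{not} always exist (Theorems~\ref{thm: secondmain} and~\ref{thm:main}). Your reduction to characteristic $p>0$ does not rescue this: the counterexamples of Theorem~\ref{thm:genclasscounter} and Counterexample~\ref{thm:ulrichdoesnotexist} are valid over any field, in particular over perfect fields of positive characteristic, so after your spreading-out and specialization you may well land on a complete local domain with no (weakly) lim Ulrich sequence. The passage ``let $\{M_n\}$ be a lim Ulrich sequence for $R$, which exists in this setting'' is therefore unjustified, and without it the Riemann--Roch computation never gets off the ground. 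Your closing remarks about substituting perfectoid big Cohen--Macaulay algebras are a reasonable indication of where new ideas would be needed, but as written the proposal is a conditional sketch, not a proof.
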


Many of the cases established for these two conjectures in \cite{IMW} utilize Ulrich modules and lim Ulrich sequences. 

Historically, the existence of Ulrich modules has been a challenging question. The existing literature is sparse and has mainly explored positive existence results, i.e. classes of rings for which Ulrich modules exist. The major existence results are that Ulrich modules exist for the following classes of rings:

\begin{enumerate}
    \item two-dimensional, standard graded Cohen-Macaulay domains \cite{BHU} 
    \item strict complete intersection rings \cite{HUB}
    \item generic determinantal rings \cite {BRW}
    \item some Veronese subrings of degree $d$ of a polynomial ring in $n$ variables: 
    \begin{itemize}
        \item $n=3$ with $d$ arbitrary and $n=4$ with $d = 2^{\ell}$ in arbitrary characteristic \cite{DHthe}
        \item arbitrary $n$ and $d$ in characteristic $0$ \cite{ESW}
        \item arbitrary $n$ and $d$ for characteristic $p \geq (d-1)n + (n+1)$ \cite{Sa}
    \end{itemize}
\end{enumerate}
Beyond these results, there has been limited progress. In particular, for over thirty years, it has been unknown whether or not (complete) local domains always have Ulrich modules. Finding a counterexample was considered to be a difficult -- if not intractable -- problem because there are essentially no good criteria to test whether or not a ring has an Ulrich module. Directly proving that a ring has no Ulrich modules involves classifying all of its MCM modules and showing that none of the MCM modules are Ulrich. 

In the first half of the paper, we resolve the question of existence of Ulrich modules in the negative. 

\begin{alphthm}
\label{thm: firstmain}
  Ulrich modules do not always exist for complete local domains.
\end{alphthm}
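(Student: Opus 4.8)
The plan is to bypass the seemingly hopeless direct classification of MCM modules over an arbitrary domain and instead engineer $R$ so that its $S_2$-ification $S$ is as simple as possible — a power series ring — and then read off the MCM $R$-modules through $S$. Recall that an MCM $R$-module $M$ is \emph{Ulrich} precisely when $\mathfrak{m}M=\mathfrak{q}M$ for a minimal reduction $\mathfrak{q}$ of $\mathfrak{m}$, equivalently when $\mu_R(M)=e(R)\operatorname{rank}_R(M)$; for any MCM $M$ one has $\mu_R(M)\le e(R)\operatorname{rank}_R(M)$, so being Ulrich is a sharp borderline condition. The proof then has three parts: (i) a classification of MCM $R$-modules via $S$; (ii) the consequence that, when $S$ is regular, $R$ admits an Ulrich module if and only if $S$ itself is one; and (iii) an explicit family of complete local domains $R$ in every dimension $d\ge 2$ for which this fails.

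First I would prove part (i): if $(R,\mathfrak{m})$ is a complete local domain with $\dim R=d\ge 2$ and $S$ is its $S_2$-ification — a module-finite extension $R\subseteq S\subseteq\operatorname{Frac}(R)$ with $S$ satisfying $(S_2)$ and $R\to S$ an isomorphism in codimension $\le 1$ — then a finitely generated $R$-module is MCM iff it is the restriction of an MCM $S$-module. The ``if'' direction is immediate since $\dim S=\dim R$ and depth is unchanged under the finite extension $R\hookrightarrow S$. For ``only if'', let $M$ be MCM over $R$; because $R$ is catenary and equidimensional (being a complete local domain), the bound $\operatorname{depth}_R M\le\operatorname{depth}_{R_\mathfrak{p}}M_\mathfrak{p}+\dim R/\mathfrak{p}$ promotes ``$\operatorname{depth}_R M=\dim R$'' to ``$M_\mathfrak{p}$ is MCM over $R_\mathfrak{p}$ for every $\mathfrak{p}$'', so $M$ is torsion-free and $(S_2)$. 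Then $SM\subseteq M\otimes_R\operatorname{Frac}(R)$ agrees with $M$ at all height-$\le 1$ primes (where $R_\mathfrak{p}=S_\mathfrak{p}$), so $C:=SM/M$ is supported in codimension $\ge 2$; a depth count on $0\to M\to SM\to C\to 0$ localized at an associated prime of $C$ — using $\operatorname{depth}M_\mathfrak{p}\ge 2$ and $\operatorname{depth}(SM)_\mathfrak{p}\ge 1$ — forces $C=0$. Hence $M$ is an $S$-module, and then MCM over $S$.

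Now arrange $S$ regular. Every MCM module over a regular local ring is free, so by (i) every MCM $R$-module is $R$-isomorphic to $S^{\oplus n}$ for some $n\ge 0$. Since $R$ and $S$ share a fraction field, $\operatorname{rank}_R(S^{\oplus n})=n$, $e_R(S^{\oplus n})=n\,e(R)$, and $\mu_R(S^{\oplus n})=n\,\mu_R(S)$, so $S^{\oplus n}$ is Ulrich over $R$ iff $\mu_R(S)=e(R)$: \emph{$R$ has an Ulrich module exactly when $S$ is an Ulrich $R$-module}. It remains to build complete local domains with regular $S_2$-ification for which this fails. Take $S=k[[x_1,\dots,x_d]]$ and any $\mathfrak{m}_S$-primary ideal $\mathfrak{a}\subsetneq\mathfrak{m}_S$ that is \emph{not} generated by a system of parameters (for instance $\mathfrak{a}=\mathfrak{m}_S^{\,2}$), and set $R=k+\mathfrak{a}$, a domain as a subring of $S$. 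Then $S$ is module-finite over $R$ (as $S/\mathfrak{a}$ has finite length); $R$ is local with $\mathfrak{m}_R=\mathfrak{a}$ and complete, being module-finite over the complete subring $k[[x_1^{\,N},\dots,x_d^{\,N}]]$ for $N\gg 0$; and $R\hookrightarrow S$ is an isomorphism off the closed point, so $S$ is the $S_2$-ification of $R$. Finally $\mu_R(S)=\dim_k(S/\mathfrak{a}S)=\ell_S(S/\mathfrak{a})$, whereas $e(R)=e(\mathfrak{a},R)=e(\mathfrak{a},S)$ (the cokernel of $R\hookrightarrow S$ has finite length), and in a Cohen--Macaulay local ring $e(\mathfrak{a},S)\ge\ell_S(S/\mathfrak{a})$ with equality only when $\mathfrak{a}$ is a parameter ideal. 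By the choice of $\mathfrak{a}$ this inequality is strict, so $S$ is not Ulrich over $R$ and $R$ has no Ulrich module whatsoever. (With $\mathfrak{a}=\mathfrak{m}_S^{\,2}$ one gets the clean comparison $\mu_R(S)=d+1<2^d=e(R)$.)

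I expect the main obstacle to be part (i): upgrading ``$M$ is MCM over the badly non-$S_2$ ring $R$'' to ``$M$ is a module over the $S_2$-ification''. This is exactly where completeness — hence equidimensionality and catenarity — of $R$ is essential, since otherwise the depth-versus-dimension bookkeeping can fail and MCM $R$-modules need not descend to $S$-modules, and it is where one must carefully run the depth count on $0\to M\to SM\to C\to 0$ to exclude a nonzero cokernel living purely in codimension $\ge 2$. Everything afterward — the equivalence ``$R$ has an Ulrich module iff $S$ does'' and the strict inequality $e(\mathfrak{a},S)>\ell_S(S/\mathfrak{a})$ — is standard. One caveat to keep in mind throughout is to use the rank-normalized definition of Ulrich, so that part (ii) is a genuine equivalence over \emph{all} MCM modules and not merely those of rank one.
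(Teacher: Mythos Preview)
Your proposal is correct and follows the same overall strategy as the paper: classify MCM $R$-modules through the $S_2$-ification $S$, observe that when $S$ is regular every MCM $R$-module is a free $S$-module, reduce the existence of an Ulrich module to the single question of whether $S$ itself is Ulrich over $R$, and then exhibit an $R$ for which it is not.

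There are two minor technical differences worth recording. First, for the key lemma that every MCM $R$-module is already an $S$-module, the paper gives a short element-level argument: for $f\in S\setminus R$ one picks $u,v\in R:_Rf$ that form part of a system of parameters, uses that $u,v$ is a regular sequence on $M$, and reads off $f\cdot m\in M$ directly. Your depth-count on $0\to M\to SM\to SM/M\to 0$ is an equally valid route; it makes the role of $(S_2)$ more transparent, though it is slightly heavier machinery and your worry about needing completeness for catenarity is unnecessary---the inequality $\operatorname{depth}_R M\le\operatorname{depth}_{R_{\mathfrak p}}M_{\mathfrak p}+\dim R/\mathfrak p$ together with $\operatorname{ht}\mathfrak p+\dim R/\mathfrak p\le\dim R$ already gives $\operatorname{depth}M_{\mathfrak p}\ge 2$ at height $\ge 2$ primes over any local domain. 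Second, your explicit family $R=k+\mathfrak{m}_S^{\,2}$ is exactly the $k+J$ construction the paper discusses in a remark (and attributes to Iyengar--Ma--Walker); the paper's own Theorem~3.3 packages a somewhat broader family by adjoining integral elements over a parameter ideal, but the mechanism is identical and your computation $\mu_R(S)=d+1<2^d=e(R)$ is the cleanest instance.
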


We prove Theorem \ref{thm: firstmain} by constructing complete local domains $R$ of all dimensions $d \geq 2$ whose $S_2$-ification $S$ is a regular local ring. The key ingredient to proving that our counterexamples do not have Ulrich modules is Lemma \ref{s2}, which states that any MCM module over $R$ is an MCM module over $S$. This yields the following intermediary theorem:

\begin{alphthm}
\label{thm: firstmainsub}
Let $(R, \m, k)$ be a local domain. Suppose $R$ has an $S_2$-ification $S$ such that $S$ is a regular local ring. Then every MCM module of $R$ has the form $S^{\oplus h}$. Consequently $R$ has Ulrich modules if and only if $S$ is an Ulrich module of $R$.  
\end{alphthm}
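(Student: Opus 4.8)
The plan is to prove Theorem \ref{thm: firstmainsub} in two stages: first establish the structural claim that every MCM $R$-module is free as an $S$-module, then deduce the Ulrich dichotomy from it.

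\textbf{Step 1: Reduce to a statement about $S$-modules.} Let $M$ be an MCM $R$-module. Since $S$ is the $S_2$-ification of $R$, it is a module-finite extension $R \subseteq S$ that agrees with $R$ in codimension $\le 1$; in particular $S$ is a domain with the same fraction field as $R$, and $\dim S = \dim R$. The key input is Lemma \ref{s2}, which tells us that $M$ is also an MCM module over $S$. The natural strategy is then to observe that over a regular local ring every MCM (equivalently, maximal depth) module is free by the Auslander--Buchsbaum formula: $\operatorname{depth}_S M = \dim S$ forces $\operatorname{pd}_S M = 0$. Hence $M \cong S^{\oplus h}$ as an $S$-module for some $h \ge 0$, and since $M$ is a nonzero $R$-module we have $h \ge 1$. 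This already yields the first assertion of the theorem.

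\textbf{Step 2: Deduce the Ulrich characterization.} Recall an MCM $R$-module $N$ is Ulrich if $e(N) = \mu(N)$, i.e. its minimal number of generators equals its Hilbert--Samuel multiplicity with respect to $\m$ (equivalently, $\m N = (\underline{x})N$ for a minimal reduction $\underline{x}$ of $\m$). Now take any MCM $R$-module $M$; by Step 1, $M \cong S^{\oplus h}$ as $S$-modules, and restricting scalars this is also an isomorphism of $R$-modules, so $M \cong S^{\oplus h}$ as $R$-modules. Multiplicity is additive over direct sums and $\mu_R(S^{\oplus h}) = h\,\mu_R(S)$, so $e(M) = h\, e(S)$ while $\mu_R(M) = h\, \mu_R(S)$ (using that the minimal generators of a direct sum are the union of those of the summands). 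Therefore $M$ is Ulrich if and only if $e(S) = \mu_R(S)$, i.e. if and only if $S$ itself is an Ulrich $R$-module. Since every MCM $R$-module has this form, $R$ has some Ulrich module precisely when $S$ is one. This gives the ``if and only if'' in the statement.

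\textbf{Main obstacle.} The only genuinely nontrivial step is verifying that the $S$-module decomposition $M \cong S^{\oplus h}$ persists as an $R$-module isomorphism and that the multiplicity/generator bookkeeping in Step 2 is literally valid for $R$-modules rather than $S$-modules — in particular that $e_{\m}(S^{\oplus h}) = h\, e_{\m}(S)$ and $\mu_R(S^{\oplus h}) = h\, \mu_R(S)$. These are routine: restriction of scalars along a finite ring map sends an $S$-module isomorphism to an $R$-module isomorphism verbatim, and both $e_{\m}(-)$ and $\mu_R(-)$ are additive on finite direct sums of finitely generated $R$-modules. Everything else is a direct appeal to Lemma \ref{s2} and to the Auslander--Buchsbaum theorem over the regular local ring $S$, so I do not anticipate any serious difficulty beyond carefully recording these additivity facts.
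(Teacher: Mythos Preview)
Your proposal is correct and follows essentially the same route as the paper: invoke Lemma~\ref{s2} to see that an MCM $R$-module is MCM over $S$, use regularity of $S$ (via Auslander--Buchsbaum) to conclude it is free over $S$, and then observe that $S^{\oplus h}$ is Ulrich over $R$ iff $S$ is. The paper's version is terser---it omits the explicit additivity bookkeeping you spell out in Step~2---but there is no substantive difference.
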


In the second half of the paper, we consider the existence of (weakly) lim Ulrich sequences. In \cite{M}, Ma introduces the weaker notion of a (weakly) lim Ulrich sequence, which are sequences of finitely generated modules that approximate the Ulrich condition, and shows that their existence implies Lech's conjecture.  He then poses the following question: 

\begin{question}
\label{maq}
Does every complete local domain of characteristic $p>0$ with an $F$-finite residue field admit a lim Ulrich sequence, or at least a weakly lim Ulrich sequence? 
\end{question}

A positive answer to Question \ref{maq} in conjunction with the argument for reduction modulo $p > 0$ in \cite{M1} would have resolved Lech's conjecture in the equicharacteristic case. Given that lim Cohen--Macaulay sequences exist for completely local domains of positive characteristic, it is reasonable to ask if (weakly) lim Ulrich sequences exist for these rings. However, in this paper, we answer the question in the negative. 

\begin{alphthm}
\label{thm: secondmain}
Weakly lim Ulrich sequences do not always exist for complete local domains. 
\end{alphthm}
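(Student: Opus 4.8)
The plan is to exhibit a single complete local domain $R$ of dimension $2$ that has no weakly lim Ulrich sequence, and the natural candidates are exactly the domains constructed to prove Theorem \ref{thm: firstmain}: a complete local domain $R$ of dimension $2$ whose $S_2$-ification $S$ is a regular local ring. By Theorem \ref{thm: firstmainsub}, every MCM $R$-module is isomorphic to a direct sum of copies of $S$, so the module theory of $R$ is extremely rigid. I would leverage this rigidity twice: once to understand the full Grothendieck-group / numerical data of MCM modules, and once more to handle the "lim" limiting process, since a weakly lim Ulrich sequence $\{M_n\}$ need not consist of MCM modules but only of modules whose defect of MCM-ness is small relative to rank.

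First I would recall the definition: a sequence $\{M_n\}$ of finitely generated $R$-modules is a weakly lim Ulrich sequence if $\lim_n \frac{\ell(H^i_{\m}(M_n))}{\operatorname{rank} M_n}=0$ for $i<d$ (the "lim CM" condition) and $\lim_n \frac{e(M_n) - e(R)\operatorname{rank}(M_n)}{\operatorname{rank}(M_n)}\le 0$ (or the appropriate normalized minimal-number-of-generators condition), where as usual one normalizes by rank. The key structural step is to pass from $M_n$ to its $S_2$-ification, or rather to compare $M_n$ with $M_n^{**}$ over $S$: since $S$ is a module-finite extension of $R$ that agrees with $R$ in codimension $\le 1$, and $S$ is regular, I expect that the lim CM hypothesis forces $M_n$ to be, up to modules of lower-dimensional support and up to a controlled correction in length, close to a free $S$-module $S^{\oplus h_n}$. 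Concretely, the plan is to show $e(M_n) - e_S(S^{\oplus h_n})$ and $\operatorname{rank}_R(M_n) - \operatorname{rank}_R(S^{\oplus h_n})$ are both $o(\operatorname{rank} M_n)$, using that $H^1_\m(M_n)$ being small forces the cokernel of $M_n \hookrightarrow M_n^{S_2}$ to be small. Then the weak Ulrich inequality for $\{M_n\}$ becomes, in the limit, the same inequality for the sequence $\{S^{\oplus h_n}\}$, i.e. it forces $e(S) \le e(R)\cdot \operatorname{rank}_R(S)$. But $\operatorname{rank}_R S = 1$ (as $R \subseteq S$ is birational) and $e_R(S) = e_R(R) = e(R)$ only when $S$ is already Ulrich, i.e. when $e(R) = $ (the minimal number of generators of $S$ as an $R$-module times... ); by Theorem \ref{thm: firstmainsub} again, $S$ being Ulrich over $R$ is exactly the criterion for $R$ to have an Ulrich module, and the construction is arranged so that this fails. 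This chain — weakly lim Ulrich $\Rightarrow$ (after $S_2$-ification) $S$ is numerically Ulrich $\Rightarrow$ $R$ has an Ulrich module, contradiction — is the skeleton of the argument, and it also proves the abstract's claim that in dimension $2$ weakly lim Ulrich sequences yield lim Ulrich sequences.

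The main obstacle, and the step I would spend the most care on, is the passage "$M_n$ small $H^1_\m$ $\Rightarrow$ $M_n$ numerically close to a sum of copies of $S$." Over $R$ itself this is false for a single module, but for a *sequence* with the defect normalized by rank going to zero it should hold, and the dimension-$2$ hypothesis is essential: here $S_2$ = MCM = depth $2$ = reflexive (over the regular ring $S$), so the only cohomological obstruction is $H^1_\m$, and $H^1_\m(M_n)$ controls precisely $\ell\big(M_n^{**}/M_n\big)$ up to $H^0_\m(M_n)$, which after modding out torsion we may assume vanishes. I would first reduce to $M_n$ torsion-free, then compare $M_n$, its $S$-module closure $S\cdot M_n \subseteq M_n^{**}$, and $M_n^{**}$, bounding each successive quotient's length by $\ell(H^1_\m(M_n))$ plus bounded error; since $M_n^{**}$ is a finitely generated reflexive (hence free, $S$ regular of dim $2$) $S$-module, $M_n^{**} \cong S^{\oplus h_n}$, and $\operatorname{rank}_R M_n = h_n$. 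Finally I would verify that multiplicities behave additively along these short exact sequences up to the length error, so that $e_R(M_n) = h_n\, e_R(S) + o(h_n)$, and that the minimal-number-of-generators count likewise satisfies $\mu_R(M_n) \ge \mu_R(S^{\oplus h_n}) - o(h_n) = h_n\,\mu_R(S) - o(h_n)$; plugging into the weak lim Ulrich inequality and dividing by $h_n$ yields $\mu_R(S) \le e_R(S)$, hence $\mu_R(S)=e_R(S)$ (the reverse inequality always holds), which says $S$ is an Ulrich $R$-module — contradicting the construction via Theorem \ref{thm: firstmainsub}. The remaining details are the explicit construction of $R$ with $S$ regular but not Ulrich in dimension $2$, which is the content of the first half of the paper and may be invoked directly.
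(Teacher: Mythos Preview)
Your overall strategy is correct and matches the paper's: take a $2$-dimensional complete local domain $R$ whose $S_2$-ification $S$ is regular (the rings of Theorem~\ref{thm:genclasscounter} with $d=2$), and show that a weakly lim Ulrich sequence on $R$ forces $e_R(S)=\nu_R(S)$, i.e.\ $S$ is Ulrich over $R$, contradicting the construction. This is exactly Theorem~\ref{thm:main}.

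One genuine correction: the definition you ``recall'' is not the one in force. Weakly lim Cohen--Macaulay is defined via Koszul homology, $\chi_1(\vecx;M_n)/\nu_R(M_n)\to 0$, and the Ulrich condition is $e_R(M_n)/\nu_R(M_n)\to 1$; both are normalized by $\nu_R(M_n)$, not by rank, and not via local cohomology. Your stated condition $\lim_n \bigl(e(M_n)-e(R)\operatorname{rank} M_n\bigr)/\operatorname{rank} M_n\le 0$ is vacuous over a domain, since $e(M_n)=e(R)\operatorname{rank}(M_n)$ identically. Your later argument silently reverts to the correct $\nu_R$-normalized inequality, so the logic survives, but the misstated definition should be fixed.

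On the technical route, the paper differs from your double-dual plan. After killing torsion (Theorem~\ref{thm:torsionreduction}), it replaces $M_n$ by $M_nS$ and bounds $\ell(M_nS/M_n)$ by a Koszul $h_1$ (Lemma~\ref{thm:partialS2}, Theorem~\ref{thm:torsionfreeoverS}); it does \emph{not} then pass to a free $S$-module via reflexivization, but instead takes the minimal free $S$-resolution $0\to S^{b_n}\to S^{a_n}\to M_nS\to 0$ and shows $b_n/a_n\to 0$ (Theorem~\ref{thm:regularlimUlrich}), from which $e_R(S)/\nu_R(S)=\lim e_R(M_n)/\nu_R(M_n)=1$ follows. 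Your approach via $M_n^{**}$ would also work, but note $M_nS$ is only torsion-free over $S$, not reflexive, so you would need a second length estimate for $(M_nS)^{**}/M_nS$; the free-resolution argument avoids this extra step.
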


To prove Theorem \ref{thm: secondmain}, we establish important characterizations of (weakly) lim Ulrich sequences for local domains of dimension $2.$ In particular, we first show: 

\begin{alphthm}
\label{thm: secondsub}
Let $(R, \m, k)$ be a local domain of dimension $2$. If $R$ has a weakly lim Ulrich sequence, then $R$ has a lim Ulrich sequence consisting of torsion-free modules.

Moreover, in the case where $R \subseteq S$ is a local module-finite extension of local domains such that $S/R$ has finite length and $S \subseteq \text{frac}(R)$, there exists a lim Ulrich sequence of $R$-modules that is also a lim Cohen--Macaulay sequence of $S$-modules.
\end{alphthm}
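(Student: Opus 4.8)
The strategy is to ``complete'' each module in the sequence to a Cohen--Macaulay module with finite-length cokernel, using the $S_2$-ification, and then to track how Koszul homology and numbers of generators change. Fix a minimal reduction $\underline{x}=x_1,x_2$ of $\m$ (after harmlessly enlarging $k$); for a torsion-free $R$-module $M$ one has $H_2(\underline{x};M)=(0:_M x_1)\cap(0:_M x_2)=0$, so the Koszul Euler characteristic gives $\ell(M/\underline{x}M)-\ell(H_1(\underline{x};M))=e(R)\operatorname{rank}(M)$; consequently, for a sequence of torsion-free modules, being lim Cohen--Macaulay is equivalent to $\ell(\m M_n/\underline{x}M_n)=o(\operatorname{rank}(M_n))$, and once this holds the lim Ulrich condition is automatic. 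Given a weakly lim Ulrich sequence $\{M_n\}$, I would first replace each $M_n$ by $M_n/\tau(M_n)$, $\tau(M_n)$ the torsion submodule: since $R$ is a $2$-dimensional domain $\dim\tau(M_n)\le 1$, so the rank is unchanged, and the weakly-Ulrich generation bound forces $\mu(M_n)=O(\operatorname{rank}(M_n))$, hence $\mu(\tau(M_n))\le\mu(M_n)=O(\operatorname{rank}(M_n))$; choosing $x_1$ superficial for $\m$ (so $e(R/x_1R)=e(R)$) this bounds all Koszul homology lengths of $\tau(M_n)$ by an absolute constant times $\operatorname{rank}(M_n)$, enough to check that the defining conditions descend to $\{M_n/\tau(M_n)\}$. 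So we may assume each $M_n$ is torsion-free.

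\textbf{The $S_2$-ification bookkeeping.} For torsion-free $M_n$, the localization at any height-one prime is torsion-free over a $1$-dimensional domain, hence $S_2$ there; so the $S_2$-ification $M_n^{S_2}$ exists with $C_n:=M_n^{S_2}/M_n$ of finite length, and $M_n^{S_2}$ is $S_2$ of dimension $2$ over the $2$-dimensional ring $R$, hence MCM over $R$. By Lemma \ref{s2}, $M_n^{S_2}$ is MCM over the $S_2$-ification $S$ of $R$ (which we may take local and module-finite over $R$); since $S$ is Cohen--Macaulay and $\underline{x}$ is a system of parameters for $S$, the sequence $\underline{x}$ is $M_n^{S_2}$-regular, so $H_i(\underline{x};M_n^{S_2})=0$ for all $i\ge1$ and $\ell(M_n^{S_2}/\underline{x}M_n^{S_2})=e(R)\operatorname{rank}(M_n)$. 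The local cohomology sequence of $0\to M_n\to M_n^{S_2}\to C_n\to 0$ and $\operatorname{depth}M_n^{S_2}=2$ give $C_n\cong H^1_{\m}(M_n)$, and the Koszul long exact sequence of the same short exact sequence, using the vanishing above, gives $H_1(\underline{x};M_n)\cong H_2(\underline{x};C_n)=0:_{C_n}(x_1,x_2)$ as well as $\ell(M_n/\underline{x}M_n)=e(R)\operatorname{rank}(M_n)+\ell\big(0:_{C_n}(x_1,x_2)\big)$. Subtracting $\mu(M_n)=\ell(M_n/\m M_n)$ gives
\[
\ell\big(\m M_n/\underline{x}M_n\big)\le\ell\big(0:_{C_n}(x_1,x_2)\big)+\big|\mu(M_n)-e(R)\operatorname{rank}(M_n)\big|,
\]
and the second summand is $o(\operatorname{rank}(M_n))$ by the generation hypothesis.

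\textbf{The main obstacle: a socle estimate.} Everything therefore reduces to proving
\[
\ell\big(0:_{C_n}(x_1,x_2)\big)=o\big(\operatorname{rank}(M_n)\big).
\]
This is the delicate point, because $\ell(C_n)=\ell(H^1_{\m}(M_n))$ itself need \emph{not} be $o(\operatorname{rank}(M_n))$: one must show the $(x_1,x_2)$-socle of $C_n$ is asymptotically negligible even when $C_n$ is comparatively large. I would attack this by Matlis duality over the completion of $R$, which presents $0:_{C_n}(x_1,x_2)$ as a quotient of $\operatorname{Ext}^1_R(M_n,\omega_R)$ modulo $(x_1,x_2)$, and then use whatever the weakly lim Ulrich hypothesis still supplies in dimension $2$ -- control of the finite-length local cohomology and of the top Koszul homology -- together with the fact that $M_n^{S_2}$ is an $S$-module. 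This is the step where dimension $2$ is genuinely used; there is no analogous bound in higher dimension, consistent with Theorem \ref{thm: firstmain} giving examples without Ulrich modules in every dimension $d\ge 2$ while only $d=2$ rules out weakly lim Ulrich sequences. Granting the estimate, the inequality of the previous paragraph yields $\ell(\m M_n/\underline{x}M_n)=o(\operatorname{rank}(M_n))$, so $\{M_n\}$ is lim Cohen--Macaulay, hence lim Ulrich, and consists of torsion-free modules -- which is the first assertion.

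\textbf{The Cohen--Macaulay sequence over $S$.} For the second assertion, note that in the situation $R\subseteq S$ of the statement one automatically has $R\subseteq S\subseteq$ (the $S_2$-ification of $R$), all inclusions with finite-length cokernel. I would take the torsion-free lim Ulrich sequence $\{M_n\}$ produced above and replace it by $\{N_n:=M_nS\}$: these are $S$-modules, $M_n\subseteq N_n\subseteq M_n^{S_2}$ with all cokernels of finite length, $\operatorname{rank}_S N_n=\operatorname{rank}_R N_n=\operatorname{rank}_R M_n$ since $\operatorname{frac}(R)=\operatorname{frac}(S)$, and Koszul homology does not depend on which of $R$ or $S$ one works over. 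Rerunning the bookkeeping of the previous two steps with $N_n$ in place of $M_n$ -- now $M_n^{S_2}$ is MCM over $S$ as well, so $H_1(\underline{x};N_n)\cong H_2(\underline{x};M_n^{S_2}/N_n)$ and $H_i(\underline{x};N_n)=0$ for $i\ge 2$ -- and the corresponding socle estimate for the finite-length module $M_n^{S_2}/N_n$, show $\ell_S(H_i(\underline{x};N_n))/\operatorname{rank}_S(N_n)\to 0$ for all $i\ge1$, i.e.\ $\{N_n\}$ is a lim Cohen--Macaulay sequence of $S$-modules; the same estimates over $R$, with the inclusion $M_n\subseteq N_n$ controlling the number of generators, show $\{N_n\}$ is still a lim Ulrich sequence of $R$-modules. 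The socle estimate of the third step is, I expect, the crux throughout.
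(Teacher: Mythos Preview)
Your proposal has two genuine gaps, and it also overlooks a shortcut that makes most of your machinery unnecessary.

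\textbf{The torsion-killing step is not justified.} You assert $\mu(\tau(M_n))\le\mu(M_n)$, but submodules need not have fewer generators than the ambient module, and you give no argument specific to the torsion submodule. Even granting this inequality, the bound you derive on the Koszul homology of $\tau(M_n)$ is only $O(\operatorname{rank}(M_n))$, whereas to make the weakly lim Ulrich condition descend you need $\nu_R(\tau(M_n))=o(\nu_R(M_n))$: an $O$-bound leaves $\nu_R(\overline{M}_n)$ potentially a fixed fraction smaller than $\nu_R(M_n)$, destroying the limit $e/\nu\to 1$. The paper handles this cleanly via the Koszul long exact sequence of $0\to C_n\to M_n\to \overline{M}_n\to 0$: since $H_2(\underline{x};\overline{M}_n)=0$ one gets $h_2(\underline{x};C_n)=h_2(\underline{x};M_n)$ and $h_1(\underline{x};C_n)\le h_1(\underline{x};M_n)$, hence $0\le\chi_1(\underline{x};C_n)\le\chi_1(\underline{x};M_n)\sim_{\mathcal{M}}0$; and because $\chi(\underline{x};C_n)=0$ (the torsion has dimension $<2$), this forces $h_0(\underline{x};C_n)=\chi_1(\underline{x};C_n)\sim_{\mathcal{M}}0$, whence $\nu_R(C_n)\sim_{\mathcal{M}}0$. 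That is the $o$-bound you need, and it comes directly from the weakly lim Cohen--Macaulay hypothesis.

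\textbf{The socle estimate is a red herring.} Once each $M_n$ is torsion-free you already have $H_2(\underline{x};M_n)=0$, so $\chi_1(\underline{x};M_n)=h_1(\underline{x};M_n)$ and weakly lim Cohen--Macaulay is \emph{literally} lim Cohen--Macaulay. Your entire $S_2$-ification/socle apparatus is unnecessary for Part~1. In fact your socle $0:_{C_n}(x_1,x_2)$ equals $H_2(\underline{x};C_n)\cong H_1(\underline{x};M_n)=\chi_1(\underline{x};M_n)$ via the long exact sequence, so the ``delicate estimate'' you propose to attack with Matlis duality is nothing more than the weakly lim Cohen--Macaulay hypothesis itself, already available once torsion is correctly removed. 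For Part~2 the paper likewise avoids any socle estimate: it bounds $\ell_R(M_nS/M_n)$ directly by $h_1(x_1^t,x_2^t;M_n)$ for a fixed $t$ with $\m^tS\subseteq R$ (via an explicit isomorphism $(M:_{M\otimes\mathcal{K}}(x_1^t,x_2^t))/M\cong H_1(x_1^t,x_2^t;M)$), and this is $o(\nu_R(M_n))$ because $\{M_n\}$ is already lim Cohen--Macaulay; the finite length of $Q_n=M_nS/M_n$ then controls all the remaining Koszul and Tor terms.
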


Then we prove the following theorem:

\begin{alphthm}
\label{thm: thirdsub}
Let $(R, \m, k)$ be a local domain of dimension $2$. Suppose $R$ has an $S_2$-ification $S$ that is a regular local ring. If $R$ has a weakly lim Ulrich sequence, then $R$ has an Ulrich module. In particular, by Theorem \ref{thm: firstmainsub}, if $R$ has a weakly lim Ulrich sequence, then $S$ is an Ulrich module of $R$.
\end{alphthm}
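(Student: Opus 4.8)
The plan is to run the lim Ulrich sequence furnished by Theorem~\ref{thm: secondsub} through the same rigidity of maximal Cohen--Macaulay modules over $S$ that powers Lemma~\ref{s2} and Theorem~\ref{thm: firstmainsub}. First note that the second part of Theorem~\ref{thm: secondsub} applies: since $\dim R = 2$ and $S$ is the $S_2$-ification of the domain $R$, the extension $R \subseteq S$ is module-finite with $S \subseteq \operatorname{frac}(R)$, and $R \to S$ is an isomorphism in codimension $\le 1$, so $S/R$ has finite length. Hence a weakly lim Ulrich sequence for $R$ produces, via Theorem~\ref{thm: secondsub}, a lim Ulrich sequence $\{M_n\}$ of torsion-free $R$-modules that is simultaneously a lim Cohen--Macaulay sequence of $S$-modules; put $r_n := \operatorname{rank}_R M_n$, which also equals $\operatorname{rank}_S M_n$ because $\operatorname{frac}(S) = \operatorname{frac}(R)$.

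Next I would reduce the conclusion to a single numerical equality. Since $S$ is regular, hence Cohen--Macaulay, of dimension $2$, it is a maximal Cohen--Macaulay $R$-module of rank $1$; so by Theorem~\ref{thm: firstmainsub} it is enough to show that $S$ is an Ulrich module of $R$, and this amounts precisely to $\mu_R(S) = e(R)$. The inequality $\mu_R(S) = \ell_R(S/\m S) \le \ell_R(S/\mathfrak q S) = e_{\mathfrak q}(S) = e(R)$ always holds, where $\mathfrak q$ is a minimal reduction of $\m$ (after the standard harmless reduction to an infinite residue field): $\mathfrak q$ is generated by an $S$-regular sequence because $S$ is maximal Cohen--Macaulay over $R$, and $e_{\mathfrak q}(S) = \operatorname{rank}_R(S)\cdot e(R)$ by the associativity formula. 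So the whole problem is the reverse inequality $\mu_R(S) \ge e(R)$.

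The decisive input is regularity of $S$ in dimension $2$: for each $n$ the reflexive hull $M_n^{**}$ satisfies Serre's condition $(S_2)$ and is therefore a maximal Cohen--Macaulay, hence free, $S$-module, so $M_n^{**} \cong S^{r_n}$ and there is a short exact sequence of $S$-modules
\[
0 \longrightarrow M_n \longrightarrow S^{r_n} \longrightarrow Q_n \longrightarrow 0
\]
with $Q_n$ of finite length. The long exact local cohomology sequence gives $Q_n \cong H^1_{\n}(M_n)$, so the hypothesis that $\{M_n\}$ is lim Cohen--Macaulay over $S$ reads exactly $\ell_S(Q_n)/r_n \to 0$, whence also $\ell_R(Q_n)/r_n = [l:k]\,\ell_S(Q_n)/r_n \to 0$ (with $l := S/\n$). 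Applying $-\otimes_R(R/\m)$ to the displayed sequence yields $\bigl|\mu_R(M_n) - r_n\,\mu_R(S)\bigr| \le \mu_R(Q_n) + \dim_k \Tor_1^R(Q_n,k)$, and $\mu_R(Q_n) \le \ell_R(Q_n) = o(r_n)$. For the remaining term I would filter the finite-length $S$-module $Q_n$ by powers of $\n$; the successive quotients are $l$-vector spaces, so repeated use of the long exact sequence for $\Tor_1^R(-,k)$ gives $\dim_k \Tor_1^R(Q_n,k) \le \bigl(\dim_k \Tor_1^R(l,k)\bigr)\,\ell_S(Q_n)$, and $\dim_k \Tor_1^R(l,k)$ is a constant depending only on $R\subseteq S$, so this term is $o(r_n)$ too. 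Therefore $\mu_R(M_n)/r_n \to \mu_R(S)$; since $\{M_n\}$ is lim Ulrich over $R$ we also have $\mu_R(M_n)/r_n \to e(R)$, forcing $\mu_R(S) = e(R)$. Thus $S$ is an Ulrich module of $R$, and the final assertion of the theorem is Theorem~\ref{thm: firstmainsub}.

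I expect the main obstacle to be precisely this last transfer: upgrading ``lim Cohen--Macaulay over $S$'' into honest asymptotic control of the $R$-module structure of the cokernels $Q_n$, and in particular bounding $\dim_k \Tor_1^R(Q_n,k)$ by $o(r_n)$ rather than merely by a fixed polynomial in $\ell_R(Q_n)$. The filtration-by-$\n$-powers argument is what resolves this, and it genuinely uses that each $Q_n$ carries an $S$-module structure; the remaining ingredients are routine manipulations of Hilbert--Samuel multiplicities together with the usual reduction to an infinite residue field.
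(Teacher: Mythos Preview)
There is a genuine gap at the step where you assert that ``the hypothesis that $\{M_n\}$ is lim Cohen--Macaulay over $S$ reads exactly $\ell_S(Q_n)/r_n \to 0$.'' The lim Cohen--Macaulay condition in this paper is formulated with Koszul homology, not local cohomology, and these are not interchangeable for sequences. Concretely, for a regular system of parameters $x,y$ of $S$, the long exact Koszul sequence attached to $0\to M_n\to S^{r_n}\to Q_n\to 0$ identifies $H_1(x,y;M_n)$ with $H_2(x,y;Q_n)=0:_{Q_n}\n$, i.e.\ with the \emph{socle} of $Q_n$, not with all of $Q_n$. So lim Cohen--Macaulay over $S$ only forces $\ell_S\bigl(0:_{Q_n}\n\bigr)/\nu_S(M_n)\to 0$, which is far weaker than $\ell_S(Q_n)/r_n\to 0$. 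An explicit counterexample over $S$ is $M_n=S^{\,n^2}\oplus \n^{\,n}$: here $a_n=\nu_S(M_n)=n^2+n+1$, $b_n=h_1(x,y;M_n)=n$, so $b_n/a_n\to 0$ and $\{M_n\}$ is lim Cohen--Macaulay (indeed lim Ulrich) over $S$; but $Q_n=M_n^{**}/M_n\cong S/\n^{\,n}$ has $\ell_S(Q_n)=\binom{n+1}{2}$ and $r_n=n^2+1$, so $\ell_S(Q_n)/r_n\to 1/2\neq 0$. Consequently your bounds on $\nu_R(Q_n)$ and on $\dim_k\Tor_1^R(Q_n,k)$ are not $o(r_n)$ in general, and the passage $\nu_R(M_n)/r_n\to\nu_R(S)$ breaks down.

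The paper's argument sidesteps this by working with the minimal free $S$-resolution $0\to S^{b_n}\to S^{a_n}\to M_n\to 0$ rather than the embedding into the reflexive hull. For a regular system of parameters one has literally $b_n=h_1(x,y;M_n)$ and $a_n=h_0(x,y;M_n)=\nu_S(M_n)$, so lim Cohen--Macaulay over $S$ is \emph{exactly} $b_n/a_n\to 0$ (this is Theorem~\ref{thm:regularlimUlrich}). Tensoring the resolution with $R/\m$ gives $0\le a_n\nu_R(S)-\nu_R(M_n)\le b_n\nu_R(S)$, hence $\nu_R(M_n)/a_n\to\nu_R(S)$; since $e_R(M_n)=r_ne(R)=(a_n-b_n)e(R)$, also $e_R(M_n)/a_n\to e(R)$; and the lim Ulrich condition $e_R(M_n)/\nu_R(M_n)\to 1$ then forces $\nu_R(S)=e(R)$. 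Your strategy becomes correct if you simply replace the reflexive-hull sequence by this free resolution; the point is that the ``small'' term must be the first syzygy $S^{b_n}$, whose size is governed directly by Koszul $h_1$, rather than the cokernel $Q_n\cong H^1_{\n}(M_n)$, whose length is not.
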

\subsection*{Acknowledgements}
I am deeply grateful to Melvin Hochster for introducing me to the questions in this paper and for the many helpful discussions, advice, and support. I would also like to thank Takumi Murayama for helpful conversations and for his comments on this manuscript. Finally, I would like to thank David Eisenbud and Bernd Ulrich for their helpful comments. This paper is based upon work supported in part by grants from the National Science Foundation, DGE 1841052, DMS-1401384, and DMS-1902116.

\section{Preliminaries}
In this section, we review the relevant definitions and properties of Ulrich modules and (weakly) lim Ulrich sequences. Throughout this paper, all rings are commutative with a multiplicative identity. All local rings $(R, \m, k)$ include the Noetherian condition. For simplicity, we assume that $k$ is infinite unless explicitly stated otherwise. In particular, we assume the existence of a minimal reduction generated by $d$ elements for the maximal ideal $\m$ of a local ring $R$.

\subsection*{Notation} Let $(R, \m, k)$ is a local ring of dimension $d$. Let $M$ be a finitely generated module over $R$.  Throughout the paper, we use the following notation: 

\begin{itemize}
    \item $\vecx = x_1, \ldots, x_d$
    \item $\ell_R(M)$ is the length of $M$ as a module over $R$. We write $\ell(M)$ when it is clear from the context which $R$ is being used. 
    \item $H_i(\vecx; M)$ is the $i$-th Koszul homology of the module $M$ with respect to  $\vecx.$
    \item $h_i^R(\vecx; M) = \ell_R(H_i(\vecx; M))$ 
    \item $\chi(\vecx;M) = \sum_{i = 0}^{d}(-1)^{i}\ell(H_i(\vecx ; M)) = \sum_{i=0}^d (-1)^i h_i(\vecx; M).$
    \item $\chi_1(\vecx; M) = \sum_{i = 1}^{d}(-1)^{i-1}\ell(H_i(\vecx ; M)) = \sum_{i=1}^d (-1)^{i-1} h_i(\vecx; M).$
    \item $\nu_R(M)$ is the minimal number of generators of $R$. 
    \item $e_R(M)$ is the multiplicity of $M$ with respect to the maximal ideal $\m.$ When $M=R$, we write $e(R).$
\end{itemize}

\subsection{Ulrich Modules}

\begin{definition}[Hilbert-Samuel Multiplicity]
Let $(R, \m, k)$ is a local ring of dimension $d$. Let $M$ be a finitely generated module over $R$. The \emph{Hilbert-Samuel multiplicity} of $R$ with respect to $\m$ is 
\begin{equation*}
e_R(M) \coloneqq d! \lim_{n \to \infty} \frac{\ell(M/m^nM)}{n^d} 
\end{equation*}
where $\text{dim}(M) = d =R/\text{Ann}_R(M).$
\end{definition}

\begin{definition}[MCM]
Let $M$ be a finitely generated module over $(R,m,k)$. Then $M$ is \emph{maximal Cohen-Macaulay} (or MCM) module of $R$ if $\text{depth}_R(M) = \text{dim}(R).$
\end{definition}

We review some useful facts. 

\begin{prop}
\label{prop:basic}
Let $(R, \m, k)$ is a local domain of dimension $d$. Let $M$ be an MCM module over $R$. Then we have
\begin{enumerate}[label=(\alph*)]
\item $e_R(M) = \text{rank}_R(M) \cdot e(R),$
\item $e_R(M) = \ell(M/IM)$, where $I \subseteq \m$ is a minimal reduction of $\m$, and 
\item $e_R(M) \geq \nu_R(M).$
\end{enumerate}
\end{prop}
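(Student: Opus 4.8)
The plan is to prove the three statements by reducing everything to the case $M = R$ and then invoking associativity of multiplicity and the theory of minimal reductions. For part (a), since $R$ is a domain, $M$ has a well-defined rank $r = \mathrm{rank}_R(M)$, and there is a short exact sequence $0 \to R^{\oplus r} \to M \to C \to 0$ (obtained by choosing a maximal free submodule) with $\mathrm{dim}(C) < d$. Because multiplicity is additive on short exact sequences of $d$-dimensional modules and vanishes on modules of dimension $< d$, we get $e_R(M) = e_R(R^{\oplus r}) = r \cdot e(R)$. One subtlety to address: $M$ being MCM over a domain of dimension $d$ forces $\mathrm{dim}(M) = d$, so the multiplicity formula in the definition applies; and $C$ has dimension strictly less than $d$ since the inclusion $R^{\oplus r} \hookrightarrow M$ is an isomorphism after tensoring with $\mathrm{frac}(R)$.

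For part (b), let $I = (\vecx) \subseteq \m$ be a minimal reduction of $\m$ generated by $d = \mathrm{dim}(R)$ elements (available since $k$ is infinite). Since $M$ is MCM of dimension $d$ and $\vecx$ generate an $\m$-primary ideal, $\vecx$ is a system of parameters for $M$ and hence a regular sequence on $M$. The standard fact is then that for a module $M$ on which a system of parameters $\vecx$ generating a reduction of $\m$ acts as a regular sequence, $e_R(M) = e(\vecx; M) = \ell(M/IM)$ — the first equality because $I$ is a reduction of $\m$ (so the Hilbert–Samuel multiplicities with respect to $\m$ and to $I$ agree), and the second because a regular sequence makes the associated graded ring of $M$ with respect to $I$ a polynomial ring over $M/IM$, so all higher $\ell(I^nM/I^{n+1}M)$ stabilize appropriately and the leading term is exactly $\ell(M/IM)$. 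Equivalently, one may cite that $\chi(\vecx; M) = \ell(M/IM)$ when $\vecx$ is a regular sequence on $M$ (higher Koszul homology vanishes) and $\chi(\vecx; M) = e(\vecx; M)$ by Serre.

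For part (c), observe that $\nu_R(M) = \ell_R(M/\m M) \leq \ell_R(M/IM) = e_R(M)$, where the middle inequality holds because $I \subseteq \m$ implies $\m M \supseteq I M$, so $M/\m M$ is a quotient of $M/IM$, and the final equality is part (b).

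The main obstacle is part (b): assembling the precise chain of standard results (reductions preserve multiplicity; a regular system of parameters makes $\mathrm{gr}_I(M)$ free over $M/IM$; or alternatively the vanishing of higher Koszul homology together with Serre's identification $\chi(\vecx;M) = e(\vecx;M)$) and deciding which to cite versus prove. Everything else is a short formal consequence once (b) is in hand.
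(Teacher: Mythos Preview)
Your argument is correct. Note, however, that the paper does not actually supply a proof of this proposition: it simply records the three statements as standard and points the reader to \cite{DHthe} and \cite{U}. What you have written is precisely the standard argument one finds in those sources---associativity/additivity of multiplicity for (a), the identification $e_R(M)=e(I;M)=\chi(\vecx;M)=\ell(M/IM)$ for a parameter ideal that is a regular sequence on $M$ for (b), and the trivial length comparison $\ell(M/\m M)\leq\ell(M/IM)$ for (c)---so there is nothing to contrast. One small remark: in (a) you could bypass the short exact sequence entirely by invoking the associativity formula $e_R(M)=\sum_{\dim R/\mathfrak{p}=d}\ell_{R_{\mathfrak{p}}}(M_{\mathfrak{p}})\,e(R/\mathfrak{p})$, which for a domain has only the term $\mathfrak{p}=(0)$ and gives $\text{rank}_R(M)\cdot e(R)$ directly; but your approach via a maximal free submodule is equally valid.
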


The statements in Proposition \ref{prop:basic} are standard in the literature. For example, proofs can be found in \cite{DHthe} and \cite{U}.

\begin{definition}
Let $(R, \m, k)$ be a local ring of dimension $d$. Let $M$ be an MCM module over $R$. Then $M$ is an \emph{Ulrich module} if $e_R(M) = \nu_R(M).$ Equivalently, $M$ is an Ulrich module if $\m M = IM$ for any minimal reduction $I\subseteq \m.$
\end{definition}

\begin{lemma}
\label{lem:algext}
Let $(R, \m, k)$ be a local domain containing $k$. Let $L$ be a finite algebraic extension of $k$. Then $S = L \otimes_k R$ is a local ring with maximal ideal $\m S$ and $S$ has an Ulrich module if and only if $R$ has an Ulrich module. \end{lemma}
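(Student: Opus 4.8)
The plan is to exploit the faithful flatness of the base change $R \to S = L\otimes_k R$, and the fact that this base change is "small" in the sense that it only extends the residue field by a finite algebraic extension while leaving the maximal ideal intact. First I would verify the preliminary claims: since $L/k$ is finite algebraic, $L = k[t]/(f(t))$ for some polynomial (in the separable case; in general one writes $L$ as a finite product-free field obtained by adjoining finitely many elements, but reducing to the one-generator case or iterating suffices), so $S = R[t]/(f(t))$ is module-finite and free over $R$, hence $S$ is Noetherian, local base change makes $\m S$ its unique maximal ideal (because $S/\m S = L\otimes_k k = L$ is a field), and $S$ is again a domain — here one uses that $R$ contains $k$ and that $L\otimes_k \operatorname{frac}(R)$ is a field or a finite product of fields, but since $S$ is a domain contained in such, and by irreducibility arguments one arranges $f$ to stay irreducible over $\operatorname{frac}(R)$; alternatively the statement should be read with the standing hypothesis that $S$ is a domain, or one passes to a suitable factor. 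In any case $\dim S = \dim R =: d$, $\m S$ is $\m$-primary with the same minimal reductions (if $I$ is a minimal reduction of $\m$ in $R$ generated by $\vecx$, then $IS$ is a minimal reduction of $\m S$ generated by the same $d$ elements), and $S$ is module-finite free of rank $[L:k]$ over $R$.

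Next I would set up the correspondence between modules. Given an Ulrich $R$-module $M$, set $N = S\otimes_R M = L\otimes_k M$. Because $S$ is $R$-free, $N$ is MCM over $S$: indeed $\operatorname{depth}_S N = \operatorname{depth}_R N = \operatorname{depth}_R M = d$ since flat base change by a module-finite free extension preserves depth (or: a system of parameters for $R$ on $M$ is a system of parameters for $S$ on $N$ and remains a regular sequence after tensoring with the free module $S$). Then I would compute the two invariants. For the minimal number of generators, $\nu_S(N) = \dim_L(N/\m S N) = \dim_L(L\otimes_k (M/\m M)) = \dim_k(M/\m M) = \nu_R(M)$, using $L\otimes_k(-)$ exact and dimension multiplicative under the tower $L/k$. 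For the multiplicity, using Proposition \ref{prop:basic}(b) with the minimal reduction $I$ of $\m$ and $IS$ of $\m S$: $e_S(N) = \ell_S(N/IS\,N) = \ell_S(S\otimes_R (M/IM))$. Now $M/IM$ is a finite-length $R$-module, and $S\otimes_R(-)$ multiplies length by $[L:k]/[l:k]$ where $l = R/\m$ is... — more carefully, $\ell_S(S\otimes_R W) = \ell_R(W)\cdot \dim_l(l\otimes_k L)^{-1}\cdot(\dots)$; the clean way is $\ell_S(S\otimes_R W)\cdot[l:k] = \ell_k(L\otimes_k W)$ when $W$ has finite length... I will instead directly observe that both $\nu$ and $e$ scale by the same factor, namely the generic rank considerations force $e_S(N)/e_R(M) = \nu_S(N)/\nu_R(M)$, so that $e_R(M)=\nu_R(M)$ if and only if $e_S(N)=\nu_S(N)$. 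Thus $N$ is Ulrich over $S$.

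For the converse, given an Ulrich $S$-module $N$, view $N$ as an $R$-module by restriction of scalars: it is finitely generated (since $S$ is module-finite over $R$), and it is MCM over $R$ because $\operatorname{depth}_R N = \operatorname{depth}_S N = d = \dim R$. Then I would show $N$ is Ulrich over $R$ by checking $\m N = I N$: but $\m N = \m S N$ (as $N$ is an $S$-module and $\m S$ is generated over $S$ by $\m$), and $\m S N = IS N = IN$ by the Ulrich condition for $N$ over $S$ together with $IS$ being a minimal reduction of $\m S$, and $IS N = I N$ for the same reason. Hence $N$ is an Ulrich $R$-module. The main obstacle I anticipate is bookkeeping the residue-field factors correctly in the multiplicity and generator counts — making sure the normalization constant relating $\ell_R$ and $\ell_S$ (equivalently the $[L:k]$ versus $[l\colon k]$ factors when $k\subsetneq R/\m$) cancels identically in the ratio $e/\nu$; the cleanest route is to avoid absolute lengths and argue with the characterizations $e_R(M)=\ell(M/IM)$ and $\nu_R(M)=\dim_k M/\m M$ directly over each ring, showing each equals the corresponding invariant over the other ring times the universal constant $\ell_S(S)/\ell_R(S)$, which is independent of $M$.
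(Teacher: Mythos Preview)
Your proposal is correct and follows essentially the same two-direction approach as the paper (tensor up to go from $R$ to $S$, restrict scalars to go from $S$ to $R$); your verification via $\m N = \m S\,N = IS\,N = IN$ in the backward direction is a clean alternative to the paper's computation of $e_R(N)/\nu_R(N)$ via ranks. The residue-field bookkeeping that worries you dissolves once you use the hypothesis that $R/\m = k$: for any finite-length $R$-module $W$ one has $\ell_S(L\otimes_k W) = \dim_L(L\otimes_k W) = \dim_k W = \ell_R(W)$, so in fact $e_S(S\otimes_R M) = e_R(M)$ and $\nu_S(S\otimes_R M) = \nu_R(M)$ with no scaling factor at all, exactly as the paper states.
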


The proof of Lemma \ref{lem:algext} is standard. We include it below for completeness. 

\begin{proof}
Observe that $S$ is a free module-finite extension of $R$ and that $\m S$ is the maximal ideal of $S$. So any system of parameters for $R$ is a system of parameters for $S$.

Now suppose $N$ is an Ulrich module over $S$. It is clear that any MCM module over $S$ is an MCM module over $R$. We have $e(R) = e(S)$ because the length of $S/(\m S)^t = L \otimes_R (R/\m^t)$ over $S$ is the same as the length of $R/\m^t$ over $R$. Let $[L:k]$ be the degree of the field extension. Then $\nu_R(N) = [L:k]\nu_S(N)$ and we have
\begin{equation*}
    e_R(N) = \text{rank}_R(N) e(R) = [L:k]\text{rank}_S(N)e(R) = [L:k]\text{rank}_S(N)e(S) = [L:k]e_S(N).
\end{equation*}
Then 
\begin{equation*}
\frac{e_R(N)}{\nu_R(N)} = \frac{e_S(N)}{\nu_S(N)} = 1.
\end{equation*}
So $N$ is an Ulrich module of $R$.

On the other hand, if $M$ is an MCM module of $R$, then $S\otimes_R M$ is an MCM module of $S$, and we have $e_R(M) = e_S(S\otimes_R M)$ and $\nu_R(M) = \nu_S(S \otimes_R M)$. So if $M$ is an Ulrich module of $R$, then $S\otimes_R M$ is an Ulrich module of $S$. 
\end{proof}

\subsection{(Weakly) lim Cohen--Macaulay sequences and (weakly) lim Ulrich sequences}

\begin{definition}
Let $(R, \m, k)$ be a local ring of dimension $d$. A sequence of finitely generated nonzero $R$-modules $\{M_n\}$ of dimension $d$ is called \emph{lim Cohen--Macaulay} if there exists a system of parameters $\vecx$ such that for all $i \geq 1$, we have 
\begin{equation*}
    \lim_{n \to \infty} \frac{\ell(H_i(\vecx; M_n))}{\nu_R(M_n)} = 0.
\end{equation*}

A sequence of finitely generated $R$-modules $\{M_n\}$ of dimension $d$ is called \emph{weaky lim Cohen--Macaulay} if there exists a system of parameters $\vecx$ such that
\begin{equation*}
    \lim_{n \to \infty} \frac{\chi_1(\vecx; M_n)}{\nu_R(M_n)} = 0.
\end{equation*}

\end{definition}

\begin{definition}
Let $(R, \m, k)$ be a local ring of dimension $d$. A sequence of finitely generated nonzero $R$-modules $\{M_n\}$ of dimension $d$ is called \emph{lim Ulrich} (respectively, \emph{weakly lim Ulrich}) if $\{M_n\}$ is lim Cohen--Macaulay (respectively, weakly lim Cohen--Macaulay) and 

\begin{equation*}
    \lim_{n \to \infty} \frac{e_R(M_n)}{\nu_R(M_n)} = 1.
\end{equation*}

\end{definition}

Throughout the paper, we use the following proposition due to \cite{BHM} and \cite{M}.
\begin{prop}[\cite{BHM}\cite{M}]
Let $(R, \m, k)$ be a local ring of dimension $d$. 
\begin{enumerate}[label=(\alph*)]
    \item \cite{BHM} If $\{M_n\}$ is a lim Cohen--Macaulay sequence of $R$, then for every system of parameters $\vecx = x_1, \ldots, x_d$, we have 
\begin{equation*}
    \lim_{n \to \infty} \frac {h_i(\vecx;M_n)}{\nu_R(M_n)} = 0
\end{equation*}
where $i\geq 1.$
    \item \cite{M} If $\{M_n\}$ is a weakly lim Cohen--Macaulay sequence of $R$, then for every system of parameters $\vecx = x_1, \ldots, x_d$, we have 
\begin{equation*}
    \lim_{n \to \infty} \frac {\chi_1(\vecx;M_n)}{\nu_R(M_n)} = 0.
\end{equation*}

\end{enumerate}
\end{prop}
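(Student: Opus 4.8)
First I would note that the statement only upgrades the vanishing built into the definitions --- for \emph{some} system of parameters --- to hold for \emph{every} system of parameters. Since $\nu_R(M_n)$ is independent of the system of parameters, it suffices to fix the distinguished $\vecx$ (witnessing the hypothesis) and an arbitrary system of parameters $\underline{y}$, and to bound $h_i(\underline{y}; M_n)$ (for part (a)), respectively $\chi_1(\underline{y}; M_n)$ (for part (b)), in terms of the corresponding quantities for $\vecx$ by a factor independent of $n$ --- that is, of the module $M_n$.

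The plan is to factor an arbitrary change of system of parameters through two elementary moves and to prove a uniform-in-$M$ comparison for each: move (1), replacing $\vecx$ by a power $\vecx^{[t]} \coloneqq x_1^t, \dots, x_d^t$; and move (2), replacing a system of parameters $\underline{z}$ by another one $\underline{w}$ with $(\underline{z}) \subseteq (\underline{w})$. (Changing the minimal generating set of a fixed parameter ideal induces an isomorphism of Koszul complexes and so contributes nothing.) Since $(\vecx)$ and $(\underline{y})$ are both $\m$-primary, one has $(\vecx^{[t]}) \subseteq (\underline{y})$ for $t \gg 0$; so once moves (1) and (2) are available one passes from $\vecx$ to $\vecx^{[t]}$ and then from $\vecx^{[t]}$ up to $\underline{y}$.

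Move (1) is elementary: iterating the one-variable filtration $M \supseteq x_j M \supseteq \dots \supseteq x_j^t M$ over $j = 1, \dots, d$ yields $\ell(M/\vecx^{[t]}M) \le t^d\, \ell(M/\vecx M)$, and combining this with $e((\vecx^{[t]}); M) = t^d\, e((\vecx); M)$ and Serre's identity $\chi_1(\underline{v}; M) = \ell(M/\underline{v}M) - e((\underline{v}); M)$ gives the sharp estimate $\chi_1(\vecx^{[t]}; M) \le t^d\, \chi_1(\vecx; M)$ needed for (b); the individual $h_i$ needed for (a) are controlled the same way through the standard short exact sequences relating $K(x)$ and $K(x^t)$. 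The real content is move (2). For part (b) the key point is a monotonicity property, $\chi_1(\underline{w}; M) \le \chi_1(\underline{z}; M)$ whenever $(\underline{z}) \subseteq (\underline{w})$ --- equivalently, $\ell\big((\underline{w})M/(\underline{z})M\big) \ge e((\underline{z}); M) - e((\underline{w}); M)$ --- after which one simply chains it with move (1) (taking $(\underline{z}) = (\vecx^{[t]})$ and $(\underline{w}) = (\underline{y})$); no further constant is even needed. For part (a), move (2) instead calls for a comparison of the total Koszul homologies $\sum_{i\ge1} h_i(\underline{w}; M)$ and $\sum_{i\ge1} h_i(\underline{z}; M)$, via the comparison maps of the Koszul complexes attached to $(\underline{z}) \subseteq (\underline{w})$ and the fact that nonvanishing Koszul homology lives only in degrees $\le d - \operatorname{depth}_R M$; this is carried out in \cite{BHM}.

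I expect move (2) for part (b) to be the main obstacle. Since only the alternating sum $\chi_1$ is under control --- not the individual homologies --- the comparison cannot simply be read off the $h_i$, and the crude bound $\ell\big((\underline{w})M/(\underline{z})M\big) \le \ell\big((\underline{w})/(\underline{z})\big)\cdot\nu_R(M)$ is too lossy: it leaves an $O(\nu_R(M_n))$ error where only an $o(\nu_R(M_n))$ one can be absorbed, and this gap is not closed merely by the a priori inequality $\nu_R(M_n) \le (1 + o(1))\, e((\vecx); M_n)$, which does hold along a weakly lim Cohen--Macaulay sequence. What is genuinely required is the sharp inequality $\ell\big((\underline{w})M/(\underline{z})M\big) \ge e((\underline{z}); M) - e((\underline{w}); M)$, to be extracted from an honest analysis of the Euler characteristics --- Serre's theorems on $\chi$ and $\chi_1$ together with Lech-type multiplicity estimates, with the dimension-one case $(0 :_N v) \subseteq (0 :_N v')$ for $v' \in (v)$ as its local model --- rather than from length counting. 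This refinement is the heart of the argument in \cite{M}, as its Koszul-homology analogue is of \cite{BHM} for part (a).
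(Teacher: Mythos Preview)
The paper does not supply a proof of this proposition: it is stated with attribution to \cite{BHM} and \cite{M} and then used as a black box throughout. There is therefore no in-paper argument to compare your proposal against.

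That said, your outline is a faithful reconstruction of how those cited proofs proceed. The reduction to the two elementary moves---passing to Frobenius-like powers $\vecx^{[t]}$ and then comparing along a containment $(\underline{z}) \subseteq (\underline{w})$ of parameter ideals---is exactly the scaffolding used in both references, and your treatment of move~(1) via $\ell(M/\vecx^{[t]}M) \le t^d\,\ell(M/\vecx M)$ together with $e((\vecx^{[t]});M) = t^d\,e((\vecx);M)$ and Serre's identity $\chi_1 = h_0 - \chi$ is correct and sharp. You have also correctly isolated the real content: for part~(b) it is precisely the monotonicity $\chi_1(\underline{w};M) \le \chi_1(\underline{z};M)$ under $(\underline{z}) \subseteq (\underline{w})$, equivalently $\ell\bigl((\underline{w})M/(\underline{z})M\bigr) \ge e((\underline{z});M) - e((\underline{w});M)$, which is established in \cite{M} by interpolating one parameter at a time and invoking Serre's positivity of $\chi_1$; for part~(a) it is the uniform comparison of total Koszul homology lengths carried out in \cite{BHM}. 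Your diagnosis that the crude estimate $\ell\bigl((\underline{w})M/(\underline{z})M\bigr) \le \ell\bigl((\underline{w})/(\underline{z})\bigr)\cdot\nu_R(M)$ is too lossy---leaving an $O(\nu_R(M_n))$ error---is exactly the reason the sharper inequality is needed, and is the right way to motivate the argument.
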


\section{Ulrich modules do not always exist for local domains}

\begin{lemma}
\label{s2}
Let $(R, \m, k)$ be a local domain. If $R$ has an $S_2$-ification $S$ that is a local ring, then any MCM module $M$ of $R$ is an MCM module of $S$. 
\end{lemma}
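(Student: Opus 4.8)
The plan is to show first that every MCM $R$-module $M$ is automatically an $S$-module, and then that depth and dimension are unchanged when we regard $M$ over $S$, so that $M$ stays MCM. If $\dim R \le 1$ then $R$ is Cohen--Macaulay, hence $S_2$, so $S = R$ and there is nothing to prove; thus I assume $d \coloneqq \dim R \ge 2$. I will use the standard properties of the $S_2$-ification: $R \subseteq S \subseteq \operatorname{frac}(R)$, $S$ is module-finite over $R$, and the conductor $\operatorname{Ann}_R(S/R)$ has height $\ge 2$, so that $S_{\mathfrak{p}} = R_{\mathfrak{p}}$ for every prime $\mathfrak{p}$ of height at most $1$.

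The first step is to observe that an MCM module $M$ over the domain $R$ is torsion-free and satisfies Serre's condition $S_2$. Since $\dim_R M = d$ and $R$ is a domain, $\operatorname{Ann}_R M = 0$, so $\operatorname{Supp}_R M = \operatorname{Spec} R$; combining $\operatorname{depth}_{R_{\mathfrak{p}}} M_{\mathfrak{p}} + \dim(R/\mathfrak{p}) \ge \operatorname{depth}_R M = d$ with $\operatorname{ht}\mathfrak{p} + \dim(R/\mathfrak{p}) \le d$ gives $\operatorname{depth}_{R_{\mathfrak{p}}} M_{\mathfrak{p}} \ge \operatorname{ht}\mathfrak{p} \ge \min(2,\operatorname{ht}\mathfrak{p})$ for all $\mathfrak{p}$. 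In particular $M$ has no associated prime of positive height, so it embeds in the $\operatorname{frac}(R)$-vector space $V \coloneqq M \otimes_R \operatorname{frac}(R)$, and I regard $M$ as an $R$-submodule of $V$ from now on.

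The heart of the argument is the following reconstruction principle: a torsion-free $R$-module $N$ with property $S_2$ satisfies $N = \bigcap_{\operatorname{ht}\mathfrak{p} \le 1} N_{\mathfrak{p}}$ inside $N \otimes_R \operatorname{frac}(R)$. Writing $N'$ for the intersection, $N \subseteq N'$, both are torsion-free, and $N'/N$ is supported only at primes of height $\ge 2$; were $N'/N$ nonzero I would choose $\mathfrak{p} \in \operatorname{Ass}_R(N'/N)$, so $\operatorname{ht}\mathfrak{p} \ge 2$, localize $0 \to N \to N' \to N'/N \to 0$ at $\mathfrak{p}$, and use $H^0_{\mathfrak{p}}(N_{\mathfrak{p}}) = H^1_{\mathfrak{p}}(N_{\mathfrak{p}}) = 0$ (from $\operatorname{depth} N_{\mathfrak{p}} \ge 2$) to obtain $H^0_{\mathfrak{p}}(N'_{\mathfrak{p}}) \cong H^0_{\mathfrak{p}}((N'/N)_{\mathfrak{p}}) \ne 0$; this contradicts the fact that $N'_{\mathfrak{p}}$, being torsion-free over the domain $R_{\mathfrak{p}}$ of dimension $\ge 2$, has positive depth. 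Now set $M' \coloneqq SM \subseteq V$; it is a finitely generated torsion-free $R$-module containing $M$, and $M'_{\mathfrak{p}} = S_{\mathfrak{p}} M_{\mathfrak{p}} = M_{\mathfrak{p}}$ for $\operatorname{ht}\mathfrak{p} \le 1$. Applying the reconstruction principle to $M$ and using $M' \subseteq \bigcap_{\operatorname{ht}\mathfrak{q}\le 1} M'_{\mathfrak{q}} = \bigcap_{\operatorname{ht}\mathfrak{q}\le 1} M_{\mathfrak{q}} = M$ forces $M = M' = SM$, so the $R$-module structure of $M$ extends to an $S$-module structure.

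It remains to note that, since $S$ is module-finite over $R$ and $(S,\n)$ is local, $\dim S = d$ and $\m S$ is $\n$-primary, whence $H^i_{\m}(M) = H^i_{\n}(M)$ for the $S$-module $M$ and therefore $\operatorname{depth}_S M = \operatorname{depth}_R M = d = \dim S$; that is, $M$ is MCM over $S$. The step I expect to be the main obstacle is the reconstruction principle in the third paragraph: this is precisely where the hypothesis that $S$ and $R$ agree outside codimension $1$ is played against the depth of $M$, and some care is needed with which local cohomology groups vanish. The remaining ingredients — the inequality for depth under specialization and the invariance of depth under module-finite extensions — are standard.
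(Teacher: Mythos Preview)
Your proof is correct, but it takes a different route from the paper's. The paper argues directly at the level of elements: given $f\in S\setminus R$ and $m\in M$, it picks $u,v\in R:_R f$ that form part of a system of parameters for $R$ (possible since this conductor has height $\ge 2$), notes that $u,v$ is then a regular sequence on the MCM module $M$, and uses the single relation
\[
u\cdot\bigl((vf)\,m\bigr)=v\cdot\bigl((uf)\,m\bigr)\in vM
\]
together with regularity of $u$ on $M/vM$ and torsion-freeness of $M$ to conclude $f\,m\in M$. Your argument instead passes through the $S_2$ property of $M$ and the ``reflexivity'' principle $M=\bigcap_{\operatorname{ht}\mathfrak p\le 1}M_{\mathfrak p}$, combined with the fact that $R$ and $S$ agree in codimension $\le 1$. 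Both approaches hinge on the same input (the conductor of $S$ over $R$ has height $\ge 2$) but exploit it differently: the paper converts it into a regular sequence on $M$ and does a two–line computation, while you convert it into a statement about localizations and invoke local cohomology. Your method is more conceptual and immediately yields the stronger conclusion that any torsion-free $S_2$ $R$-module with full support is already an $S$-module; the paper's method is shorter and avoids the depth–specialization inequality and local cohomology entirely. One small remark: in your reconstruction principle the intersection $N'$ need not be finitely generated, so ``positive depth'' is slightly imprecise there, but all you actually use is $H^0_{\mathfrak p R_{\mathfrak p}}(N'_{\mathfrak p})=0$, which follows from torsion-freeness alone; alternatively you can bypass $N'$ and run the same local-cohomology argument directly on $SM/M$, which is finitely generated.
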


\begin{proof}
Let $M$ be an MCM module over $R$. We want to show that for any $f \in S - R$ and any $m \in M$, there is a well-defined element $f \cdot m \in M$.  Let $W = R - \{0\}$. Since $M$ is MCM, it is torsion-free over $R$ and embeds in $W^{-1}M$. It suffices to show that $f \cdot (m/1) \in M$. Since the height of the ideal $R :_{R} f$ is at least two, there exist $u$ and $v$ in $R :_{R} f$ such that the sequence $u, v$ is a part of a system of parameters for $R$. Since $M$ is MCM, the sequence $u, v$ is a regular sequence on $M$. Then $v \cdot ((uf) \cdot (m/1)) = u \cdot ((vf) \cdot (m/1)) \in vM$ implies that $(vf)\cdot (m/1) \in vM$. Since $M$ is torsion-free over $R$, we have $f\cdot (m/1) \in M.$
\end{proof}

\begin{theorem}
\label{thm:regularulrich}
Let $(R, \m, k)$ be a local domain. Suppose $R$ has an $S_2$-ification $S$ such that $S$ is a regular local ring. Then every MCM module of $R$ has the form $S^{\oplus h}$. Consequently $R$ has Ulrich modules if and only if $S$ is an Ulrich module of $R$ if and only if $IS = \m S$ for any minimal reduction $I$ of $\m.$
\end{theorem}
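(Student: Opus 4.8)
The plan is to bootstrap from Lemma~\ref{s2}. Let $M$ be an MCM $R$-module. Since the $S_2$-ification $S$ lies between $R$ and its (module-finite) normalization, it is module-finite over $R$; combined with the fact that Lemma~\ref{s2} promotes $M$ to an $S$-module, this shows $M$ is finitely generated over $S$. Moreover Lemma~\ref{s2} gives that $M$ is MCM over $S$, i.e.\ $\operatorname{depth}_S M = \dim S = \dim R = d$ (here $\dim S = \dim R$ because $S/R$ is module-finite and birational). Now I would invoke regularity of $S$: every finitely generated $S$-module has finite projective dimension, so the Auslander--Buchsbaum formula yields $\operatorname{pd}_S M = \operatorname{depth} S - \operatorname{depth}_S M = d - d = 0$. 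Hence $M$ is a free $S$-module, $M \cong S^{\oplus h}$ for some $h \geq 1$, which is the first assertion.

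For the consequences, I would first record that $S$ is itself an MCM $R$-module: $S$ is regular, hence Cohen--Macaulay, so $\operatorname{depth}_S S = \dim S = d$; and since $S/\m S$ is a finite-dimensional $k$-vector space, $\m S$ is primary to the maximal ideal $\n$ of $S$, so $\operatorname{depth}_R S = \operatorname{depth}_S S = d = \dim R$. Thus ``$S$ is an Ulrich module of $R$'' is a meaningful statement. If $R$ has some Ulrich module $M$, then $M \cong S^{\oplus h}$ by the first part, and since both $e_R(-)$ and $\nu_R(-)$ are additive over finite direct sums we get $e_R(M) = h\, e_R(S)$ and $\nu_R(M) = h\, \nu_R(S)$; the Ulrich equality $e_R(M) = \nu_R(M)$ then forces $e_R(S) = \nu_R(S)$, i.e.\ $S$ is Ulrich over $R$. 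The converse is immediate. Finally, using the reformulation of the Ulrich condition in its definition (equivalently, Proposition~\ref{prop:basic}(b) together with $\nu_R(S) = \ell(S/\m S)$) and the fact that $IS \subseteq \m S$ always holds, $S$ is Ulrich over $R$ if and only if $\ell(S/IS) = \ell(S/\m S)$, which holds if and only if $\m S = IS$, for any (equivalently, some) minimal reduction $I$ of $\m$.

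The only step that demands genuine care is the transition from ``$M$ finitely generated and MCM over $R$'' to ``$M$ finitely generated and MCM over $S$'': this is precisely Lemma~\ref{s2} combined with module-finiteness of $S$ over $R$ and the invariance of depth under a module-finite local extension (which one sees via local cohomology, using that $\m S$ is $\n$-primary). Everything else is a routine application of Auslander--Buchsbaum and the additivity of $e_R(-)$ and $\nu_R(-)$, so I do not anticipate a serious obstacle once Lemma~\ref{s2} is in hand.
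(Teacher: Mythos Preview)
Your proposal is correct and follows essentially the same approach as the paper: invoke Lemma~\ref{s2} to see that any MCM $R$-module is MCM over the regular local ring $S$, hence free, and then use additivity of $e_R$ and $\nu_R$ on direct sums to reduce the Ulrich question to $S$ itself. The paper's proof is considerably terser---it leaves Auslander--Buchsbaum, the depth comparison between $R$ and $S$, and the equivalence with $IS=\m S$ implicit---but the underlying argument is the same.
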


\begin{proof}
By Lemma \ref{s2}, any MCM module $M$ over $R$ is MCM over $S$. But $S$ is regular. Hence $M \cong S^{\oplus h}$. The second statement follows immediately because $S^{\oplus h}$ is an Ulrich module of $R$ if and only if $S$ is an Ulrich module of $R$. 
\end{proof}

\begin{theorem}
\label{thm:genclasscounter}
Let $S = k[\![\vecx]\!] = k[\![x_1, \ldots, x_d]\!]$ where $d\geq2.$ Let $\vecu = u_1, \ldots, u_d$ be a system for parameters of $S$ such that $I = (\vecu)S$ is not integrally closed. Let $\overline{I}$ be the integral closure of $I$ in $S$. Let $\{g_{\lambda}\}_{\lambda\in \Lambda}$ be an arbitrary collection of elements in $\overline{I}$ and $f \in \overline{I}-I.$ For $1 \leq j \leq d$, let $v_j, w_j$ be elements of the maximal ideal of $ k[\![\vecu]\!]$ that generate a height $2$ ideal in $k[\![\vecu]\!]$ (e.g. one can take powers of distinct elements in $\{u_1, \ldots, u_d\}).$ Define $R$ to be the domain
\begin{equation*}
    R \coloneqq k[\![\vecu]\!][f][v_jx_j, w_jx_j]_{1 \leq j \leq d}[g_{\lambda}]_{\lambda \in \Lambda}.
\end{equation*}
Then $R$ has no Ulrich modules. 
\end{theorem}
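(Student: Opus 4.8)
The plan is to deduce everything from Theorem~\ref{thm:regularulrich}. If I can show that $S=k[\![\vecx]\!]$ is the $S_2$-ification of $R$ (it is regular local by construction), then $R$ has Ulrich modules if and only if $JS=\m_R S$ for a minimal reduction $J$ of the maximal ideal $\m_R$ of $R$, and it remains to show this equality fails. So the work splits into verifying the hypotheses of Theorem~\ref{thm:regularulrich} and then a colength computation in $S$.

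\emph{Step 1: the structure of $R$ and $S$.} Since $\vecu$ is a system of parameters of the complete local ring $S$, $S$ is module-finite over $A\coloneqq k[\![\vecu]\!]$, and in fact $S=A[x_1,\dots,x_d]$ (Nakayama, using that $S/(\vecu)S$ is spanned over $k$ by monomials in the $x_i$). The ring $R$ is an $A$-subalgebra of $S$, hence an $A$-submodule of the Noetherian $A$-module $S$, hence module-finite over $A$; being a domain and module-finite over the complete local ring $A$, $R$ is a complete Noetherian local domain of dimension $d$ (this handles an arbitrary index set $\Lambda$). Each $x_j=(v_jx_j)/v_j$ lies in $\text{frac}(R)$, since $v_jx_j\in R$ and $0\neq v_j\in A\subseteq R$; hence $S=A[x_1,\dots,x_d]\subseteq\text{frac}(R)$ and $S$ is module-finite over $R$. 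Finally, for a height-one prime $\mathfrak p$ of $R$ the ideal $(v_j,w_j)R$ has height at least $2$ (because $(v_j,w_j)$ has height $2$ in the regular ring $A$ and $R$ is module-finite over $A$), so $\mathfrak p\not\supseteq(v_j,w_j)R$; thus one of $v_j,w_j$ is a unit in $R_{\mathfrak p}$ and $x_j\in R_{\mathfrak p}$, whence $S\subseteq R_{\mathfrak p}$ and $R_{\mathfrak p}=(R\setminus\mathfrak p)^{-1}S$. So $S$ is module-finite over $R$, lies in $\text{frac}(R)$, satisfies $S_2$ (being regular), and agrees with $R$ in codimension one; therefore $S$ is the $S_2$-ification of $R$ and Theorem~\ref{thm:regularulrich} applies.

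\emph{Step 2: the colength comparison.} The generators $\vecu,f,v_jx_j,w_jx_j,g_\lambda$ of the $A$-algebra $R$ all lie in $\m_S\cap R=\m_R$ and generate it, so
\[
  \m_R S=(\vecu)S+(f)S+\textstyle\sum_{\lambda}(g_\lambda)S=I+(f)S+\textstyle\sum_{\lambda}(g_\lambda)S,
\]
since $v_j\in(\vecu)A$ forces $v_jx_j\in(\vecu)S=I$ and likewise $w_jx_j\in I$. As $f,g_\lambda\in\overline I$ but $f\notin I$, this gives $I\subsetneq\m_R S\subseteq\overline I$, hence $\overline{\m_R S}=\overline I$. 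Now let $J=(a_1,\dots,a_d)R$ be any minimal reduction of $\m_R$. Then $JS$ is a reduction of $\m_R S$, so $\overline{JS}=\overline I$, which is $\m_S$-primary; thus $JS$ is $\m_S$-primary and generated by $d=\dim S$ elements, i.e.\ $a_1,\dots,a_d$ is a system of parameters of the Cohen--Macaulay ring $S$. Consequently $\ell_S(S/JS)=e_S(JS)=e_S(\overline I)=e_S(I)=\ell_S(S/I)$, using invariance of multiplicity under integral closure and that $I=(\vecu)S$ is a parameter ideal of the Cohen--Macaulay ring $S$. Since $I\subsetneq\m_R S$ we get $\ell_S(S/\m_R S)<\ell_S(S/I)=\ell_S(S/JS)$, and since $JS\subseteq\m_R S$ this forces $JS\subsetneq\m_R S$. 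Hence $S$ is not an Ulrich module of $R$, and Theorem~\ref{thm:regularulrich} gives that $R$ has no Ulrich modules.

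\emph{Where the difficulty lies.} The delicate point is the claim in Step 1 that $S$ is the $S_2$-ification of $R$, i.e.\ that $R$ and $S$ coincide after localizing at every height-one prime; this is exactly what the auxiliary generators $v_jx_j,w_jx_j$ together with the height-$2$ hypothesis on $(v_j,w_j)$ are designed to secure. After that, the only subtlety is that $JS$ must be a parameter ideal of $S$, which is ensured by the squeeze $I\subsetneq\m_R S\subseteq\overline I$ (so that $\overline{\m_R S}=\overline I$ is $\m_S$-primary); the remaining comparisons of lengths and numbers of generators over $R$ versus over $S$ (both residue fields being $k$) and the standard facts about reductions and multiplicities are routine.
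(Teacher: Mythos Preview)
Your proof is correct, and Step~1 matches the paper's argument: both show that each $x_j$ is multiplied into $R$ by the height-$2$ ideal $(v_j,w_j)R$, so $x_j$ lies in the $S_2$-ification and hence $S$ is the $S_2$-ification of $R$.

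Step~2 differs. The paper takes the shortest route: it observes that $(\vecu)R$ is \emph{itself} a minimal reduction of $\m_R$, because every other generator of $\m_R$---namely $f$, the $g_\lambda$, and the $v_jx_j,w_jx_j$---is integral over $(\vecu)R$ (the first two lie in $\overline I$, the last are products of elements of $(\vecu)R$ with elements integral over $R$). Then one simply notes $(\vecu)S=I\neq\m_R S$ since $f\in\m_R S\setminus I$, and Theorem~\ref{thm:regularulrich} finishes. Your argument instead treats an arbitrary minimal reduction $J$ and compares lengths via multiplicities: $\ell_S(S/JS)=e_S(JS)=e_S(I)=\ell_S(S/I)>\ell_S(S/\m_R S)$, using $\overline{JS}=\overline{\m_R S}=\overline I$. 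This is valid and in a sense more uniform (it never singles out a preferred reduction), but it is longer; once you have noticed the inclusion $I\subsetneq\m_R S\subseteq\overline I$, the paper's observation that $(\vecu)R$ is already a minimal reduction makes the multiplicity computation unnecessary.
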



\begin{proof}
First, notice that $k[\![\vecu]\!] \subset k[\![\vecx]\!]$ is a module-finite extension. Then $R$ is (Noetherian) local and $R \subset k[\![\vecx]\!] $ is a module-finite extension. 

Let $\m_R$ be the maximal ideal of $R$. From the construction of $R$, it is clear that $\vecu = u_1, \ldots, u_d$ is a system for parameters for $R$ and in fact, a minimal reduction of $\m_R$ because all the other adjoined elements are integral over $(\vecu) S$ in $S$ and thus integral over $(\vecu) R$ in $R$.  Then for all $1 \leq j \leq d$, the element $x_j$ is multiplied into $R$ by $v_j$ and $w_j$ which generate a height $2$ ideal in $R$. Thus $x_j$ is in the $S_2$-ification of $R$ for all $1\leq j \leq d.$ But this means that $S = k[\![\vecx]\!]$ is the $S_2$-ification of $R.$

By Theorem \ref{thm:regularulrich}, it suffices to show that $S$ is not an Ulrich module of $R$. But $(\vecu)S \neq \m_RS$ because $f \notin (\vecu)S.$ Thus $R$ has no Ulrich modules. 
\end{proof}

\begin{remark}
Similar constructions can be used to generate counterexamples that are essentially of finite type over fields. For example, see Counterexample \ref{thm:ulrichdoesnotexist}.
\end{remark}

\begin{remark}
In \cite{IMW}, Iyengar, Ma, and Walker consider rings of the form $T = k + J$ where $S = k[\![x, y]\!]$ and $J \subseteq S$ is an ideal primary to $(x, y)S$. If $J$ has a minimal reduction $I = (u, v)S$, then the rings $T$ have the form in Theorem \ref{thm:genclasscounter}. Thus $T = k +J$ has no Ulrich modules if $J \neq IS.$

In the case where $J$ does not have a minimal reduction, we can reduce to the previous case by taking a finite algebraic field extension of $k$ so that $J$ has a minimal reduction, and then apply Lemma \ref{lem:algext}. 
\end{remark}

\begin{cexample}\label{thm:ulrichdoesnotexist}
We give an explicit counterexample using the construction in \ref{thm:genclasscounter}. The local domain $$R = k[x^n, x^{n+1}, x^ny, y^{n}, y^{n+1}, xy^{n}, xy]_{\mathfrak{m}},$$ 
where $\mathfrak{m}$ is the maximal ideal $(x^n, x^{n+1}, x^ny, y^{n}, y^{n+1}, xy^{n}, xy)$, and its completion 
$$ \widehat{R} = k[\![x^n, x^{n+1}, x^ny, y^n, y^{n+1}, xy^n, xy]\!]$$
do not have Ulrich modules for $n\geq 2.$

The argument is essentially the same for $R$ and $\widehat{R}$. We will work with $R$. The $S_2$-ification of $R$ is $k[x,y]_{(x,y)}$ and the ideal $(xy, x^n-y^n)R$ is a minimal reduction for $\m R$. But $$(xy, x^n-y^n)S \neq (x^n, x^{n+1}, x^ny, y^n, y^{n+1}, xy^n, xy)S$$ as ideals in $S$.
\end{cexample}

\begin{rmk}
We can use $R$ in Counterexample \ref{thm:ulrichdoesnotexist} to give a new counterexample to the localization of Ulrich modules, i.e. a local ring $(T,\mathfrak{n},\ell)$ that has an Ulrich module $M$ and a prime ideal $\mathfrak{p} \subseteq T$ such that $M_{\mathfrak{p}}$ is not an Ulrich module over $T_{\mathfrak{p}}$. While a counterexample to localization was first given by Hanes in \cite{DHthe}, the following counterexample is stronger in the sense that $T$ localizes to a ring that has no Ulrich modules whereas Hanes's counterexample localizes to a ring that does have an Ulrich module.
\end{rmk}

\begin{cexample}[Localization] 
\label{ex:counter}
Consider the ring
\begin{equation*}
    T = k[s^{n+1}, sx^{n}, x^{n+1}, x^ny, sy^n, y^{n+1}, xy^n, s^{n-1}xy]_{\mathfrak{n}}
\end{equation*}
where $\mathfrak{n} = (s^{n+1}, sx^{n}, x^{n+1}, x^ny, sy^n, y^{n+1}, xy^n, s^{n-1}xy)$ and $n\geq2$. Let $$\varphi: T \hookrightarrow k[s,x,y]_{(s,x,y)}$$ be the inclusion map and $\mathfrak{p} = \varphi^{-1}((x,y))$. Then the localization  $T_{\mathfrak{p}}$ is the ring 
\begin{equation*}
  k(s^{n+1})\Big[\Big(\frac{x}{s}\Big)^n, \Big(\frac{x}{s}\Big)^{n+1}, \Big(\frac{x}{s}\Big)^n \Big(\frac{y}{s}\Big), \Big(\frac{y}{s}\Big)^n, \Big(\frac{y}{s}\Big)^{n+1}, \Big(\frac{x}{s}\Big)\Big(\frac{y}{s}\Big)^n, \Big(\frac{x}{s}\Big)\Big(\frac{y}{s}\Big) 
    \Big]
\end{equation*}
localized at the obvious maximal ideal. But $T_{\mathfrak{p}}$ has no Ulrich modules by Theorem \ref{thm:ulrichdoesnotexist}.

It remains to show that $T$ has an Ulrich module. Let $S = k[s,x,y]_{(s, x, y)}^{(n+1)}$ with maximal ideal $\mathfrak{a}$. One can compute $e_{T}(T) = (n+1)^2 = e_{S}(S)$. The rings $T$ and $S$ have the same fraction field and so $\text{rank}_T(S) = 1$. Now $S$ has an Ulrich module $M$ by Proposition 3.6 in \cite{DH2004}. Then $M$ is MCM over $T$ and $\text{rank}_{T}(M) = \text{rank}_{S}(M)$. Then 
\begin{equation*}
(n+1) \cdot \text{rank}_{T}(M) = e_{T}(M) \geq \nu_{T}(M) \geq \nu_{S}(M) = e_{S}(M) = (n+1) \cdot \text{rank}_{T}(M). 
\end{equation*}
Thus $e_{T}(M) = \nu_{T}(M)$ and $M$ is Ulrich over $T.$
\end{cexample}

\section{(Weakly) lim Cohen--Macaulay and (weakly) lim Ulrich sequences over domains of dimension 2 }

\begin{definition}
Let $(R, \m, k)$ be a local ring. Let $\mathcal{M} = \{M_n\}$ be a sequence of nonzero finitely generated $R$-modules. Let $\nu_R(M_n)$ be the minimal number of generators of $M_n$. Let $\{a_n\}$ and $\{b_n\}$ be a sequence of positive integers. We define $\sim_{\mathcal{M}}$ to be the equivalence relation $\sim_{\mathcal{M}}$ where $\{a_n\} \sim_{\mathcal{M}} \{b_n\}$ if $$\lim_{n \to \infty} \frac{a_n -b_n}{\nu_R(M_n)} = 0.$$
For the sake of simplicity, we write $a_n \sim_{\mathcal{M}} b_n$ instead of $\{a_n\} \sim_{\mathcal{M}} \{b_n\}$.
\end{definition}

\begin{lemma}
\label{lem:sameequiv}
Let $(R, \m, k)$ be a local ring. Let $\mathcal{M} = \{M_n\}$ and $\mathcal{N} = \{N_n\}$ be two sequences of nonzero finitely generated $R$-modules. Let $\{a_n\}$ and $\{b_n\}$ be a sequence of non-negative integers. Suppose $\nu_R(N_n) \sim_{\mathcal{M}} \nu_R(M_n).$ If $a_n \sim_{\mathcal{M}} b_n$, then $a_n \sim_{\mathcal{N}} b_n.$ In particular, $\nu_R(M_n) \sim_{\mathcal{N}} \nu_R(N_n).$

\end{lemma}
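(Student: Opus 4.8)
The plan is to recognize that the hypothesis $\nu_R(N_n) \sim_{\mathcal{M}} \nu_R(M_n)$ is nothing more than the assertion that the ratio $\nu_R(N_n)/\nu_R(M_n)$ tends to $1$, and then to transport the $\mathcal{M}$-equivalence $a_n \sim_{\mathcal{M}} b_n$ to an $\mathcal{N}$-equivalence by multiplying the relevant quotient by the (convergent) ratio of the two generator counts. So the first step is to unwind the hypothesis: since each $M_n$ is nonzero and finitely generated, $\nu_R(M_n) \geq 1$, and the relation
\[
\lim_{n\to\infty} \frac{\nu_R(N_n) - \nu_R(M_n)}{\nu_R(M_n)} = 0
\]
is equivalent to $\lim_{n\to\infty} \nu_R(N_n)/\nu_R(M_n) = 1$. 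In particular this ratio is eventually bounded away from $0$, so (using $\nu_R(N_n) \geq 1$ as well) the reciprocal ratio is defined for large $n$ and $\lim_{n\to\infty} \nu_R(M_n)/\nu_R(N_n) = 1$.

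Second, given $a_n \sim_{\mathcal{M}} b_n$, for $n$ large I would simply factor
\[
\frac{a_n - b_n}{\nu_R(N_n)} = \frac{a_n - b_n}{\nu_R(M_n)} \cdot \frac{\nu_R(M_n)}{\nu_R(N_n)}.
\]
The first factor tends to $0$ by assumption, and the second tends to $1$ by the previous step, so the product tends to $0$. This is precisely the statement $a_n \sim_{\mathcal{N}} b_n$.

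Finally, for the "in particular" clause I would observe that the hypothesis is symmetric up to sign: $\lim_{n\to\infty}(\nu_R(M_n) - \nu_R(N_n))/\nu_R(M_n) = 0$ as well, i.e. $\nu_R(M_n) \sim_{\mathcal{M}} \nu_R(N_n)$, and then apply the statement just proved with $a_n = \nu_R(M_n)$ and $b_n = \nu_R(N_n)$ (both sequences of non-negative integers) to conclude $\nu_R(M_n) \sim_{\mathcal{N}} \nu_R(N_n)$. I do not anticipate a genuine obstacle: the whole argument is a short limit manipulation. The only point deserving a moment's care is the bookkeeping in the first step — namely that every $\nu_R(M_n)$ and $\nu_R(N_n)$ is strictly positive, so that all the quotients written above are legitimate and the passage from the vanishing of $(\nu_R(N_n)-\nu_R(M_n))/\nu_R(M_n)$ to the convergence $\nu_R(M_n)/\nu_R(N_n)\to 1$ is valid.
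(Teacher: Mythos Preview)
Your argument is correct. The paper actually states this lemma without proof, treating it as a routine limit manipulation; your write-up supplies precisely the elementary verification one would expect, so there is nothing to compare against and no gap to flag.
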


\begin{theorem}
\label{thm:torsionreduction}

Let $(R, \m, k)$ be a local domain of dimension $2$. Let $\{M_n\}$ be a weakly lim Cohen--Macaulay (resp. weakly lim Ulrich) sequence over $R$. Let $C_n \subseteq M_n$ be a torsion submodule such that the quotient $\oM_n \coloneqq M_n/C_n$ has no finite length submodules. Then the sequence $\{\oM_n\}$ is a lim Cohen--Macaulay (resp. lim Ulrich) sequence over $R$. 
\end{theorem}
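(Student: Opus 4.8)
The plan is to compare Koszul homology of $M_n$ and $\oM_n$ using the short exact sequences relating $M_n$, $C_n$, and $\oM_n$, and to control the contribution of the torsion part via a dimension count and the hypothesis on finite-length submodules. First I would fix a system of parameters $\vecx = x_1, x_2$ witnessing the weakly lim Cohen--Macaulay property for $\{M_n\}$. The key point is that since $R$ is a $2$-dimensional domain and $C_n$ is torsion, $\dim C_n \leq 1$, so $H_2(\vecx; C_n) = 0$ and the only Koszul homology of $C_n$ in degrees $\geq 1$ is $H_1(\vecx; C_n)$. Moreover, $H_1(\vecx; C_n)$ is the kernel of multiplication by (a Koszul-type combination of) the $x_i$ on $C_n$, hence is a finite-length module. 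From the short exact sequence $0 \to C_n \to M_n \to \oM_n \to 0$ and its long exact sequence in Koszul homology, one gets that $h_2(\vecx; \oM_n) \leq h_2(\vecx; M_n)$ and that $h_1(\vecx; \oM_n)$ differs from $h_1(\vecx; M_n)$ by something bounded in terms of $\ell(H_1(\vecx;C_n))$ and $\ell(H_0(\vecx;C_n))=\ell(C_n/\vecx C_n)$, which is again finite length.

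Next I would use the hypothesis that $\oM_n$ has no finite-length submodules, together with $\dim\oM_n = 2$, to show $\oM_n$ is "unmixed enough" that $H_1(\vecx;\oM_n)$ has finite length: since $\vecx$ is a system of parameters, $\oM_n/x_1\oM_n$ has dimension $\leq 1$, and $H_1(\vecx;\oM_n)$ injects into (a copy of) the $x_1$-torsion of $\oM_n$, i.e. into $(0:_{\oM_n} x_1)$... more carefully, $H_2(\vecx;\oM_n) = (0 :_{\oM_n} (x_1,x_2))$ has finite length, and having no finite-length submodules forces $H_2(\vecx;\oM_n)=0$, so $\oM_n$ is actually Cohen--Macaulay-ish in top degree. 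Then the long exact sequence forces $H_1(\vecx;C_n) \twoheadrightarrow H_1(\vecx;\oM_n)$ modulo the image of $H_1(\vecx;M_n)$, and combining the bounds I would obtain an estimate of the shape
\begin{equation*}
  h_1(\vecx;\oM_n) \;\leq\; h_1(\vecx;M_n) + h_2(\vecx;M_n) + \ell(C_n/\vecx C_n),
\end{equation*}
so that $h_1(\vecx;\oM_n) \leq \chi_1(\vecx;M_n) + 2h_2(\vecx;M_n) + \ell(C_n/\vecx C_n)$ up to harmless reindexing. The remaining task is to divide by $\nu_R(\oM_n)$ and let $n\to\infty$: the first two terms are controlled because $\{M_n\}$ is weakly lim Cohen--Macaulay (so $\chi_1(\vecx;M_n)/\nu_R(M_n)\to 0$, and $h_2(\vecx;M_n)\leq\chi_1(\vecx;M_n)$ in dimension $2$ since $\chi_1 = h_1 - h_2$ with... in fact one uses $h_2 \le h_1$, equivalently $\chi_1 \ge 0$, which holds for $d\le 2$), while $\ell(C_n/\vecx C_n) = e(\vecx; C_n) \leq e(\vecx;M_n) - e(\vecx;\oM_n) + (\text{finite length terms})$, and crucially $\nu_R(\oM_n) \sim_{\mathcal M} \nu_R(M_n)$ because $C_n$ has finite length... wait, $C_n$ need not have finite length, only torsion. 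Here I would instead argue $\nu_R(M_n) \geq \nu_R(\oM_n)$ always, and that $\nu_R(\oM_n)$ does not drop too much: Nakayama gives $\nu_R(M_n) \le \nu_R(\oM_n) + \nu_R(C_n)$, and one controls $\nu_R(C_n)$ since $\dim C_n \le 1$ and $C_n$ embeds into $M_n$; this is exactly the place where Lemma~\ref{lem:sameequiv} and the definition of $\sim_{\mathcal M}$ are designed to be invoked.

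The main obstacle I anticipate is precisely the bookkeeping around $\nu_R(\oM_n)$ versus $\nu_R(M_n)$ and $\ell(C_n/\vecx C_n)$ versus $\nu_R(M_n)$: one must show that passing to the quotient neither shrinks the minimal number of generators by a non-negligible proportion nor introduces a non-negligible torsion multiplicity. The cleanest route is probably to first reduce to the case where $C_n$ itself has finite length — replacing $C_n$ by its finite-length part is not allowed since we need $\oM_n$ to have no finite-length submodules, but one can filter $C_n$ and note that the positive-dimensional part of $C_n$ contributes to $e(\vecx;M_n)$ and to $\nu_R$ in a way that is comparable — and then the finite-length case is handled by the elementary observation that $H_i(\vecx;-)$ of a finite-length module is itself finite length with length bounded by a constant times $\ell$, which is bounded by $e(\vecx;M_n)$, which is $O(\nu_R(M_n))$ by Proposition~\ref{prop:basic}(c) once we are in (resp. approaching) the MCM/Ulrich regime. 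For the weakly lim Ulrich case, the additional input $e_R(M_n)/\nu_R(M_n)\to 1$ transfers to $\oM_n$ because $e_R(\oM_n) \leq e_R(M_n)$ with the difference controlled by $e(\vecx;C_n)$, which we have just shown is $o(\nu_R(M_n))$, and $\nu_R(\oM_n)\sim_{\mathcal M}\nu_R(M_n)$; hence $e_R(\oM_n)/\nu_R(\oM_n)\to 1$ as well, completing the argument.
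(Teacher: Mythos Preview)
Your overall shape---use the long exact sequence in Koszul homology for $0\to C_n\to M_n\to\oM_n\to 0$, observe $H_2(\vecx;\oM_n)=0$ from the no-finite-length-submodule hypothesis, and then transfer the asymptotic bounds---matches the paper. But there is a genuine error that breaks the argument at the crucial step.

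You claim that $\dim C_n\le 1$ forces $H_2(\vecx;C_n)=0$. This is false: $H_2(\vecx;C_n)=(0:_{C_n}(x_1,x_2))$ is the finite-length part of $C_n$, and since $\oM_n$ has no finite-length submodules, \emph{all} of the finite-length torsion of $M_n$ sits inside $C_n$. In fact the long exact sequence, together with $H_2(\vecx;\oM_n)=0$, gives the isomorphism $H_2(\vecx;C_n)\cong H_2(\vecx;M_n)$, not zero. This isomorphism is exactly the missing ingredient: combined with the injection $H_1(\vecx;C_n)\hookrightarrow H_1(\vecx;M_n)$ it yields
\[
\chi_1(\vecx;C_n)=h_1(\vecx;C_n)-h_2(\vecx;C_n)\le h_1(\vecx;M_n)-h_2(\vecx;M_n)=\chi_1(\vecx;M_n),
\]
and since $\chi(\vecx;C_n)=0$ one has $\chi_1(\vecx;C_n)=h_0(\vecx;C_n)=\ell(C_n/\vecx C_n)$. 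This is what controls $\ell(C_n/\vecx C_n)$ and hence $\nu_R(C_n)$ in terms of $\chi_1(\vecx;M_n)$, giving $\nu_R(\oM_n)\sim_{\mathcal M}\nu_R(M_n)$. Your attempts to bound $\ell(C_n/\vecx C_n)$ via multiplicities or a filtration of $C_n$ do not work: $e(\vecx;C_n)=\chi(\vecx;C_n)=0$, so comparing multiplicities gives no information, and with your (incorrect) $H_2(\vecx;C_n)=0$ you would only get $h_0(\vecx;C_n)=h_1(\vecx;C_n)\le h_1(\vecx;M_n)$, which the weakly lim Cohen--Macaulay hypothesis does not control. A smaller point: for the Ulrich part, $e_R(\oM_n)=e_R(M_n)$ exactly (torsion modules have multiplicity zero in dimension $d$), so no estimate of $e(\vecx;C_n)$ is needed there.
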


\begin{proof} Let $\mathcal{M} \coloneqq \{M_n\}$ be a weakly lim Cohen--Macaulay sequence over $R$. Let $I = (\vecx)$ be a system of parameters of the maximal ideal $\m$ and let $\nu_R(M_n)$ be the minimal number of generators of $M_n$. First, we check that 
\begin{equation*}
 \nu_R(M_n) \sim_{\mathcal{M}} \nu_R(\oM_n).   
\end{equation*}
Consider the short exact sequence
\begin{equation*}
0 \to C_n \to M_n \to \oM_n \to 0.
\end{equation*}

\vspace{1em}

\noindent We know that $\nu_R(\oM_n) \leq \nu_R(M_n) \leq \nu_R(\oM_n) + \nu_R(C_n).$ So it suffices to show that $$\nu_R(C_n) \sim_{\mathcal{M}} 0.$$
From the short exact sequence, we get the long exact sequence of Koszul homology
\begin{align*}
    0 &\to H_2(\vecx;C_n) \to  H_2(\vecx;M_n) \to H_2(\vecx; \oM_n)\\ 
    &\to H_1(\vecx;C_n) \to H_1(\vecx;M_n) \to H_1(\vecx; \oM_n) \to H_0(\vecx;C_n) \to H_0(\vecx;M_n) \to H_0(\vecx; \oM_n) \to 0.
\end{align*}
Now $\oM_n$ has no finite length torsion submodules, so $H_2(\vecx;\oM_n) = 0$. We observe the following: 

\begin{enumerate}[label=(\alph*)]
    \item $H_2(\vecx;C_n) \cong H_2(\vecx;M_n)$ 
    \item $h_1(\vecx; C_n) \leq h_1(\vecx; M_n)$
    \item $\chi_1(\vecx;M) \geq 0$ for any finitely generated $R$-module $M$ \cite{S}
    \item $\chi(\vecx; C_n) = 0$
    \item $0 \leq h_0(\vecx; M_n) - h_0(\vecx; \oM_n) \leq h_0(\vecx; C_n)$
\end{enumerate}
\vspace{1em}
From (a), (b), and (c), it follows that
\begin{equation*}
0 \leq \chi_1(\vecx; C_n) \leq \chi_1(\vecx; M_n).
\end{equation*}

\vspace{1em}
\noindent and because $\mathcal{M}$ is weakly lim Cohen--Macaulay, we have
\begin{equation}
\label{eq:mid}
    \chi_1(\vecx;C_n) \sim_{\mathcal{M}} 0.
\end{equation}
But $\chi(\vecx;C_n) = 0$ and so, $\chi_1(\vecx;C_n) = h_0(\vecx;C_n) = \ell(C_n/(\vecx)C_n).$ Then the inequality

\begin{equation*}
    \nu_R(C_n) = \ell(C_n/ \m C_n) \leq \ell(C_n/(\vecx)C_n)
\end{equation*}
yields
\begin{equation*}
    \nu_R(C_n) \sim_{\mathcal{M}} 0.
\end{equation*}

Next, we show that $\{\oM_n\}$ is a lim Cohen-Macaulay sequence over $R$. We already know that $h_2(\vecx; \oM_n) = 0.$ It remains to show 
\begin{equation*}
    \lim_{n \to \infty} \frac{ h_1(\vecx;\oM_n)}{\nu_R(\oM_n)} = 0.
\end{equation*}
By Lemma \ref{lem:sameequiv}, it is enough to show that 
\begin{equation*}
    \lim_{n \to \infty} \frac{ h_1(\vecx;\oM_n)}{\nu_R(M_n)} = 0.
\end{equation*}
because $\nu_R(\oM_n) \sim_{\mathcal{M}} \nu_R(M_n)$.
Take the alternating sum of the lengths of the Koszul homology in the exact sequence
\begin{align*}
0 \to H_1(\vecx;C_n) \to H_1(\vecx;M_n) \to H_1(\vecx; \oM_n) \to H_0(\vecx;C_n) \to H_0(\vecx;M_n) \to H_0(\vecx; \oM_n) \to 0.
\end{align*}
This is the sum
\begin{equation*}
    h_1(\vecx; C_n) - h_1(\vecx; M_n) + h_1(\vecx; \oM_n) - h_0(\vecx; C_n) + h_0(\vecx; M_n) - h_0(\vecx;\oM_n) = 0.
\end{equation*}
Then 
\begin{align*}
    h_1(\vecx; \oM_n)& =  -h_1(\vecx; C_n) + h_0(\vecx; C_n) + h_1(\vecx; M_n)  - h_0(\vecx; M_n) + h_0(\vecx;\oM_n)\\
    & = -h_2(\vecx;C_n) + h_1(\vecx;M_n) -h_0(\vecx;M_n) + h_0(\vecx;\oM_n) \\
    &= -h_2(\vecx;M_n) + h_1(\vecx;M_n) -h_0(\vecx;M_n) + h_0(\vecx;\oM_n) \\ 
    & = \chi_1(\vecx;M_n) - (h_0(\vecx;M_n) - h_0(\vecx;\oM_n)). 
\end{align*}
Now we know that 
\begin{equation*}
\chi_1(\vecx;M_n) \sim_{\mathcal{M}} 0
\end{equation*}
and by (e) and \ref{eq:mid} above, we have 
\begin{equation*}
   0 \leq h_0(\vecx;M_n) - h_0(\vecx;\oM_n) \leq h_0(\vecx;C_n) = \chi_1(\vecx;C_n) \sim_{\mathcal{M}} 0.
\end{equation*}
Thus
\begin{equation*}
       \lim_{n \to \infty} \frac{ h_1(\vecx;\oM_n)}{\nu_R(M_n)} = 0
\end{equation*}
and the sequence $\{\oM_n\}$ is lim Cohen--Macaulay. It remains to check that 
\begin{equation*}
      \lim_{n \to \infty} \frac{e_R(\oM_n)}{\nu_R(\oM_n)} = 1.
\end{equation*}
But $e_R(\oM_n) = e_R(M_n)$ and $\nu_R(\oM_n) \sim_{\mathcal{M}} \nu_R(M_n)$, so the condition immediately follows and thus $\{\oM_n\}$ is a lim Ulrich sequence of $R$. 
\end{proof}

\begin{definition}
Let $(R, \m, k)$ be a local domain and let $M$ be finitely generated torsion-free $R$-module. Let $(S, \mathfrak{n}, \ell)$ be a local module-finite extension domain of $R$. Suppose $\mathcal{K} = frac(R) = frac(S).$ Then we define $MS$ to be the $S$-module generated by $M$ in $M \otimes_R \mathcal{K}.$ 
\end{definition}

\begin{remark}
In the case where $R$ is a local domain with a $S_2$-ification $S$ that is local, if $M$ is an MCM module of $R$,  then $MS = M$ by Lemma \ref{s2}.
\end{remark}

\begin{lemma}
\label{thm:partialS2}
Let $(R, \m, k)$ be a local domain of dimension $2$ and let $M$ be a finitely generated torsion-free $R$-module. Let $(S, \mathfrak{n}, \ell)$ be a local module-finite extension domain of $R$. Suppose $S \subseteq \text{frac}(R)$ and $S/R$ has finite length. Choose a fixed constant $t$ such that $\m^tS \subseteq R.$ Let $x,y$ be a system of parameters for $R$. Then 

\begin{enumerate}[label=(\alph*)]
\item $MS \subseteq M:_{\mathcal{K} \otimes_R M}(x^t, y^t)R,$
\vspace{.5em}
\item $(M:_{M \otimes_R \mathcal{K}}(x^t,y^t))/M \cong H_1(x^t,y^t;M),$
\vspace{.5em}
\item $\ell_R(MS/M) \leq h_1(x^t, y^t;M).$
\end{enumerate}
\end{lemma}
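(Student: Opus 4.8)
The plan is to prove the three parts in the stated order, with (c) following quickly from (a) and (b). Throughout I keep everything inside the ambient $\mathcal{K}$-vector space $M \otimes_R \mathcal{K}$, into which $M$ embeds since it is torsion-free over the domain $R$ and inside which $MS$ is, by definition, the $S$-submodule generated by $M$.

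\emph{Part (a).} Here I would use the hypothesis $\m^t S \subseteq R$ directly. Given $m \in M$ and $s \in S$, the containments $x, y \in \m$ give $x^t s,\, y^t s \in \m^t S \subseteq R$, hence $x^t(sm) = (x^t s)m \in M$ and $y^t(sm) = (y^t s)m \in M$; that is, $sm \in M :_{M \otimes_R \mathcal{K}}(x^t, y^t)R$. As this colon submodule is closed under addition and $MS$ is additively generated by the products $sm$ with $s \in S$, $m \in M$, we get $MS \subseteq M :_{M \otimes_R \mathcal{K}}(x^t, y^t)R$.

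\emph{Part (b).} The plan is to write down an explicit isomorphism. Note first that $\sqrt{(x^t,y^t)R} = \sqrt{(x,y)R} = \m$, so $x^t, y^t$ is again a system of parameters and $H_1(x^t,y^t;M)$ has finite length; moreover $x \neq 0$ in $R$ (a parameter in a two-dimensional ring is nonzero), so multiplication by $x^t$ is invertible on $M \otimes_R \mathcal{K}$. Using the two-term Koszul complex $M \xrightarrow{d_2} M^{\oplus 2} \xrightarrow{d_1} M$ with $d_2(w) = (-y^t w, x^t w)$ and $d_1(u,v) = x^t u + y^t v$, I would define
\[
\Psi \colon M :_{M \otimes_R \mathcal{K}}(x^t, y^t)R \longrightarrow H_1(x^t, y^t; M), \qquad \Psi(z) = \bigl[(y^t z, -x^t z)\bigr].
\]
For $z$ in the colon module both coordinates $y^t z$, $x^t z$ lie in $M$ and $d_1(y^t z, -x^t z) = 0$, so $\Psi$ is a well-defined $R$-linear map. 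One has $(y^t m, -x^t m) = d_2(-m)$ for $m \in M$, so $M \subseteq \ker\Psi$; conversely $\Psi(z) = 0$ means $(y^t z, -x^t z) = d_2(w)$ for some $w \in M$, and comparing second coordinates gives $-x^t z = x^t w$, hence $z = -w \in M$ after cancelling $x^t$ in $M \otimes_R \mathcal{K}$. Finally, given a cycle $(u,v) \in M^{\oplus 2}$ with $x^t u + y^t v = 0$, the element $z \coloneqq -v/x^t \in M \otimes_R \mathcal{K}$ has $x^t z = -v \in M$ and $y^t z = -y^t v/x^t = x^t u/x^t = u \in M$, so $z$ lies in the colon module with $\Psi(z) = [(u,v)]$; hence $\Psi$ is surjective. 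Thus $\Psi$ descends to an isomorphism $\bigl(M :_{M \otimes_R \mathcal{K}}(x^t, y^t)R\bigr)/M \cong H_1(x^t, y^t; M)$. (One could instead pass through the standard identification $H_1(x^t,y^t;M) \cong (x^t M :_M y^t)/x^t M$ via $z \mapsto x^t z$, but the direct map seems cleaner.)

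\emph{Part (c).} By (a), inside $M \otimes_R \mathcal{K}$ we have $M \subseteq MS \subseteq M :_{M \otimes_R \mathcal{K}}(x^t, y^t)R$, so $MS/M$ is an $R$-submodule of $\bigl(M :_{M \otimes_R \mathcal{K}}(x^t, y^t)R\bigr)/M$, which by (b) is isomorphic to the finite-length module $H_1(x^t,y^t;M)$; taking lengths gives $\ell_R(MS/M) \leq \ell_R(H_1(x^t,y^t;M)) = h_1(x^t,y^t;M)$.

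The only real obstacle is bookkeeping in (b): one must keep $M$, the colon module, the Koszul cycles, and $MS$ all inside the single ambient vector space $M \otimes_R \mathcal{K}$ so that the quotients appearing are honest submodule quotients and dividing by $x^t$ is legitimate. Once that is arranged, the construction of $\Psi$ and the computation of its kernel and cokernel are each only a line or two.
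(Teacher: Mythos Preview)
Your proof is correct and follows essentially the same approach as the paper: your map $\Psi$ is exactly the paper's map $\psi$, your surjectivity witness $z=-v/x^t$ is the paper's inverse map $\varphi$, and parts (a) and (c) are handled identically. If anything, you have supplied more detail than the paper does (the paper declares (a) ``clear by the choice of $t$'' and checks only well-definedness of $\varphi$ and $\psi$ before asserting they are mutually inverse).
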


\begin{proof} Part (a) is clear by the choice of $t$. Part (c) follows immediately from parts (a) and (b). It remains to prove part (b). Define
\begin{equation*}
\varphi: H_1(x^t,y^t;M) \to (M:_{M \otimes_R \mathcal{K}}(x^t,y^t))/M 
\end{equation*}
to be the map
\begin{equation*}
  [(u, v)] \mapsto \Big[\frac{u}{y^t} \Big] = \Big[\frac{-v}{x^t} \Big]
\end{equation*}
where the equality follows from the relation $ux^t + vy^t = 0.$ 
This map is well-defined. If $[(u,v)]$ is trivial, then there exists $w \in M$ such that $[(u,v)] = [y^tw, -x^tw]$. But $[y^tw, -x^tw]$ is mapped to $[(y^tw)/y^t] = [w/1] =0.$

For the map going the other direction, define 
\begin{equation*}
\psi: (M:_{M \otimes_R \mathcal{K}}(x^t,y^t))/M \to H_1(x^t,y^t;M)
\end{equation*}
to be the map 
\begin{equation*} 
[f] \mapsto [(y^tf, -x^tf)].
\end{equation*} 
This is clearly well-defined. The maps $\varphi$ and $\psi$ are inverses, so we are done.
\end{proof}

\begin{theorem}
\label{thm:torsionfreeoverS}
Let $(R, \m, k)$ be a local domain of dimension $2$. Let $(S, \mathfrak{n}, \ell)$ be a local module--finite extension domain of $R$ such that $S \subseteq \text{frac}(R)$ and $S/R$ has finite length. Let $\mathcal{M} = \{M_n\}$ be a lim Cohen--Macaulay (resp. lim Ulrich) sequence of torsion-free $R$-modules. Then the sequence $\mathcal{N} = \{M_nS\}$ is a lim Cohen--Macaulay (resp. lim Ulrich) sequence of $R$-modules and also a lim Cohen--Macaulay sequence  of $S$-modules. 
\end{theorem}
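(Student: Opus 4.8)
The plan is to show that $M_nS$ differs from $M_n$ only by a module of length $a_n := \ell_R(M_nS/M_n)$, that $a_n = o(\nu_R(M_n))$, and then to deduce every assertion by squeezing, keeping track that all comparison constants are independent of $n$. The two external inputs I will use are Lemma~\ref{thm:partialS2}(c) and the fact (the recalled result of \cite{BHM}) that a lim Cohen--Macaulay sequence satisfies $h_i(\vecy;M_n)/\nu_R(M_n)\to 0$ for \emph{every} system of parameters $\vecy$ and every $i\ge 1$.

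\textbf{Step 1: the key estimate.} Fix a system of parameters $x,y$ of $R$ and choose $t$ with $\m^t S\subseteq R$, so that $x^t,y^t$ is again a system of parameters of $R$. By Lemma~\ref{thm:partialS2}(c), $a_n = \ell_R(M_nS/M_n)\le h_1(x^t,y^t;M_n)$, and since $\mathcal M = \{M_n\}$ is lim Cohen--Macaulay the cited result of \cite{BHM} gives $h_1(x^t,y^t;M_n)/\nu_R(M_n)\to 0$; hence $a_n\sim_{\mathcal M}0$. Writing $Q_n := M_nS/M_n$ and applying $-\otimes_R k$ to $0\to M_n\to M_nS\to Q_n\to 0$, I get $\nu_R(M_nS)\le \nu_R(M_n)+\nu_R(Q_n)\le \nu_R(M_n)+a_n$. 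For the reverse direction I filter $Q_n$ by $a_n$ copies of $k$ and use $\dim_k \Tor_1^R(k,k)=\dim_k(\m/\m^2)$ in each step, obtaining $\nu_R(M_n)\le \nu_R(M_nS)+\dim_k(\m/\m^2)\cdot a_n$. As $\dim_k(\m/\m^2)$ is a fixed constant, $|\nu_R(M_nS)-\nu_R(M_n)|\le \dim_k(\m/\m^2)\cdot a_n$, so $\nu_R(M_nS)\sim_{\mathcal M}\nu_R(M_n)$; by Lemma~\ref{lem:sameequiv} every $\sim$-statement for $\mathcal N=\{M_nS\}$ may henceforth be checked against $\nu_R(M_n)$.

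\textbf{Step 2: $\mathcal N$ is lim Cohen--Macaulay (resp. lim Ulrich) over $R$.} Each $M_nS$ is a nonzero finitely generated torsion-free $R$-module of dimension $2$ (it lies inside the $\mathcal K$-vector space $M_n\otimes_R\mathcal K$ and $\operatorname{Ann}_R(M_nS)\subseteq \operatorname{Ann}_R(M_n)=0$), hence $H_2(x,y;M_nS)=0$. The Koszul long exact sequence of $0\to M_n\to M_nS\to Q_n\to 0$ gives $h_1(x,y;M_nS)\le h_1(x,y;M_n)+h_1(x,y;Q_n)$, and $h_1(x,y;Q_n)\le 2\ell_R(Q_n)=2a_n$ since $H_i$ of any module is a subquotient of $\binom{d}{i}$ copies of it. Dividing by $\nu_R(M_n)$ (legitimate by Step 1) and letting $n\to\infty$ yields $h_1(x,y;M_nS)/\nu_R(M_nS)\to 0$, so $\mathcal N$ is lim Cohen--Macaulay over $R$. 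For the Ulrich case, additivity of Hilbert--Samuel multiplicity together with $\dim Q_n=0<2$ gives $e_R(M_nS)=e_R(M_n)$, and $\nu_R(M_nS)/\nu_R(M_n)\to 1$ by Step 1, so $e_R(M_nS)/\nu_R(M_nS)\to 1$ as soon as $e_R(M_n)/\nu_R(M_n)\to 1$.

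\textbf{Step 3: $\mathcal N$ is lim Cohen--Macaulay over $S$.} Since $S$ is module-finite over the $2$-dimensional domain $R$, it is local of dimension $2$ and $x,y$ is a system of parameters for $S$. The Koszul complex $K_\bullet(x,y;M_nS)$, hence each homology module $H_i(x,y;M_nS)$, is literally the same whether formed over $R$ or over $S$; as it has finite $R$-length, $\ell_S\big(H_i(x,y;M_nS)\big)\le \ell_R\big(H_i(x,y;M_nS)\big)$. For the denominator I set $V_n := M_nS/\m M_nS$, a module over the Artinian local ring $S/\m S$ with residue field $\ell$, and estimate $\nu_R(M_nS)=\dim_k V_n=[\ell:k]\,\ell_S(V_n)\le [\ell:k]\,\ell_S(S/\m S)\cdot \nu_S(M_nS)$, so that $\nu_S(M_nS)\ge \nu_R(M_nS)/C$ with $C:=[\ell:k]\,\ell_S(S/\m S)$ a fixed constant. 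Combining with Step 2 gives $\ell_S(H_i(x,y;M_nS))/\nu_S(M_nS)\le C\cdot h_i^R(x,y;M_nS)/\nu_R(M_nS)\to 0$ for every $i\ge 1$, which is exactly lim Cohen--Macaulayness of $\mathcal N$ over $S$.

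\textbf{Main obstacle.} There is no single deep step: the proof is a chain of squeeze estimates powered by Lemma~\ref{thm:partialS2}(c). The care is entirely bookkeeping --- verifying that the comparison constants ($\dim_k(\m/\m^2)$, the residue-degree $[\ell:k]$, the length $\ell_S(S/\m S)$, and the binomial bounds $\binom{d}{i}$) are genuinely independent of $n$, and noticing that the Koszul homology of $M_nS$ may be computed indifferently over $R$ or over $S$, so that the $R$-side estimate of Step 2 feeds directly into the $S$-side estimate of Step 3.
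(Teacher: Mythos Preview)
Your proof is correct and follows essentially the same route as the paper: bound $\ell_R(M_nS/M_n)$ by $h_1(x^t,y^t;M_n)$ via Lemma~\ref{thm:partialS2}(c), control $\nu_R(M_nS)$ against $\nu_R(M_n)$ through the $\Tor$ long exact sequence and a filtration of $Q_n$ by copies of $k$, then pass to Koszul homology and finally to $S$ using that the Koszul complex is insensitive to the base ring together with $\nu_S(M_nS)\ge \nu_R(M_nS)/C$. The only noticeable difference is that in Step~2 you bound $h_1(x,y;Q_n)$ directly by $\binom{2}{1}\ell_R(Q_n)=2a_n$, whereas the paper derives $h_1(\vecx;Q_n)\sim_{\mathcal M}0$ from $\chi(\vecx;Q_n)=0$ after first establishing $h_0,h_2\sim_{\mathcal M}0$; your shortcut is slightly cleaner but not a different idea, and your constant $C=[\ell:k]\,\ell_S(S/\m S)$ in Step~3 is exactly $\ell_R(S/\m S)=\nu_R(S)$, the constant the paper uses.
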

 
\begin{proof}
We first prove that 
\begin{equation*}
    \nu_R(M_n) \sim_{\mathcal{M}} \nu_R(M_nS).
\end{equation*}
Let $Q_n = M_nS/M_n.$ Note that $Q_n$ has finite length because $S/R$ has finite length. The short exact sequence
\begin{equation*}
\label{eq:partialSES}
    0 \to M_n \to M_nS \to Q_n \to 0
\end{equation*}
yields the long exact sequence 
\begin{equation*}
    \ldots \to \Tor_1^R(Q_n,k) \to M_n \otimes_R k \to M_nS \otimes_R k \to Q_n\otimes_R k \to 0.
\end{equation*}
Then \begin{equation*}
\nu_R(M_n) \leq \nu_R(M_nS) + \ell(\Tor_1^R(Q_n,k)) \leq \nu_R(M_n) + \nu_R(Q_n) + \ell(\Tor_1^R(Q_n,k)).
\end{equation*}
and so it suffices to show that
\begin{equation*}
\ell(\Tor_1^R(Q_n,k))\sim_{\mathcal{M}}0 \text{\hspace{1em} and \hspace{1em}} \nu_R(Q_n) \sim_{\mathcal{M}}0.
\end{equation*}
Let $\vecx = x_1, x_2$ be a system of parameters for $R$.   By Lemma \ref{thm:partialS2}, we know that $\ell(Q_n) \leq h_1(x_1^t, x_2^t;M_n)$ for some fixed $t$. But $\mathcal{M}$ is a lim Cohen-Macaulay sequence, so $h_1(x_1^t, x_2^t;M_n) \sim_{\mathcal{M}} 0$ and 
\begin{equation*}
    \ell(Q_n) \sim_{\mathcal{M}} 0.
\end{equation*} 
Then 
\begin{equation*}
\nu_R(Q_n) = \ell(Q_n/ \m Q_n) \sim_{\mathcal{M}}0.
\end{equation*}
Next, by taking a prime cyclic filtration of $Q_n$, one can observe that 
\begin{equation*}
\ell(\Tor^R_1(Q_n,k)) \leq \ell(Q)\ell(\Tor^R_1(k,k)).
\end{equation*}
Then it immediately follows that 
\begin{equation*}
    \ell(\Tor_1^R(Q_n,k))\sim_{\mathcal{M}}0.
\end{equation*}

\vspace{1em}
We now show that $\mathcal{N} = \{M_nS\}$ is a lim Cohen--Macaulay sequence of $R$-modules. It is enough to show that 
\begin{equation*}
    h_1(\vecx;M_nS) \sim_{\mathcal{M}} 0.
\end{equation*}
Because $M_n$ and $M_nS$ are torsion-free over $R$, we have the long exact sequence 
\begin{align*}
    0 &\to H_2(\vecx;Q_n) \to H_1(\vecx;M_n) \to H_1(\vecx;M_nS) \to H_1(\vecx; Q_n)\\ &\to  H_0(\vecx;M_n) \to H_0(\vecx;M_nS) \to H_0(\vecx;Q_n) \to 0.
\end{align*}
Observe that for all $i \geq 0$ 
\begin{equation*}
    h_i(\vecx; Q_n) \sim_{\mathcal{M}} 0.
\end{equation*}
 We see that
 \begin{equation*}
 h_2(\vecx; Q_n) \sim_{\mathcal{M}} 0
 \end{equation*}
 because $H_2(\vecx;Q_n)$ injects into $H_1(\vecx;M_n)$. We already proved that $\ell(Q_n) \sim_{\mathcal{M}} 0$. It immediately follows that 
\begin{equation*}
h_0(\vecx; Q_n) = \ell(Q_n/ \vecx Q_n) \sim_{\mathcal{M}} 0.
\end{equation*}
Then, it follows from $\chi(\vecx; Q_n) = 0$ that 
\begin{equation*}
h_1(\vecx; Q_n) \sim_{\mathcal{M}} 0.
\end{equation*}
From the long exact sequence on Koszul homology, we have 
\begin{equation*}
    h_1(\vecx;M_nS) \leq h_1(\vecx;M_n) + h_1(\vecx;Q_n).
\end{equation*}
But $h_1(\vecx;M_n) \sim_{\mathcal{M}} 0$ and $h_1(\vecx;Q_n)\sim_{\mathcal{M}} 0.$ Therefore, $\mathcal{N} = \{M_nS\}$ is a lim Cohen--Macaulay sequence over $R$.

If $\mathcal{M}$ is lim Ulrich, it immediately follows that $\mathcal{N}$ is lim Ulrich because $e_R(M_n) = e_R(M_nS)$ and $\nu_R(M_n) \sim_{\mathcal{M}} \nu_R(M_nS).$ It remains to check that $\mathcal{N} = \{M_nS\}$ is a lim Cohen--Macaulay sequence for $S.$

For any i, the Koszul homology $H_i(\vecx;M_nS)$ does not change for whether we think of $M_nS$ as an $R$-module or an $S$-module. We also have 
\begin{equation*}
\nu_R(M_nS) \leq \nu_R(S)\nu_S(M_nS),
\end{equation*}
which yields 
\begin{equation*}
 \frac{\nu_R(M_nS)}{\nu_R(S)} \leq \nu_S(M_nS).
\end{equation*}
Then 
\begin{equation*}
    \lim_{n\to \infty} \frac{h_1^S(\vecx;M_nS)}{\nu_S(M_nS)} \leq    \lim_{n\to \infty} \frac{\nu_R(S)h_1^R(\vecx;M_nS)}{\nu_R(M_nS)} =\nu_R(S) \lim_{n\to \infty} \frac{h_1^R(\vecx;M_nS)}{\nu_R(M_nS)} = 0.  
\end{equation*}
\vspace{0.5em}

\noindent Thus $\mathcal{N} = \{M_nS\}$ is a lim Cohen-Macaulay sequence over $S$.
\end{proof}

\begin{theorem}
\label{thm:regularlimUlrich}
A sequence of finitely generated nonzero torsion--free modules $\{N_n\}$ over a regular local ring $S$ of dimension $2$ is lim Cohen--Macaulay if and only if for the minimal free resolution 

\begin{equation*}
0 \to S^{b_n} \to S^{a_n} \to N_n \to 0
\end{equation*} 
we have $ \lim_{n \to \infty} b_n/a_n = 0.$ Such a sequence is always lim Ulrich. 

\end{theorem}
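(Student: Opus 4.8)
The plan is to extract every invariant from the minimal free resolution and then let a judicious choice of system of parameters finish the argument. First I would record the structural facts. Since $S$ is regular of dimension $2$ and each $N_n$ is a nonzero torsion--free $S$--module, $\text{depth}_S N_n \geq 1$, so Auslander--Buchsbaum gives $\text{pd}_S N_n \leq 1$; this justifies the stated shape $0 \to S^{b_n} \xrightarrow{\phi_n} S^{a_n} \to N_n \to 0$ of the minimal resolution, shows that $a_n = \nu_S(N_n)$, that every entry of $\phi_n$ lies in $\m$, and (after localizing at the zero ideal) that $\text{rank}_S(N_n) = a_n - b_n$. I would also note that $e(S) = 1$ and that $\dim N_n = 2$ for every $n$, since $N_n$ is torsion--free over the $2$--dimensional domain $S$; thus $\{N_n\}$ is eligible to be a lim Cohen--Macaulay sequence.

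Next I would fix $\vecx = x_1, x_2$ to be a regular system of parameters for $S$, so that $(\vecx)S = \m$ and $\vecx$ is an $S$--regular sequence; hence $H_i(\vecx; S) = 0$ for $i \geq 1$ and $H_0(\vecx; S) \cong k$. Feeding the resolution into the long exact sequence of Koszul homology and using freeness of $S^{a_n}$ and $S^{b_n}$, the sequence collapses to $H_2(\vecx; N_n) = 0$ together with the exact sequence
\[
0 \longrightarrow H_1(\vecx; N_n) \longrightarrow k^{b_n} \xrightarrow{\ \overline{\phi}_n\ } k^{a_n} \longrightarrow H_0(\vecx; N_n) \longrightarrow 0,
\]
where $\overline{\phi}_n$ is $\phi_n$ reduced modulo $(\vecx)S = \m$. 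By minimality of the resolution, $\overline{\phi}_n = 0$, so $h_2(\vecx; N_n) = 0$, $h_1(\vecx; N_n) = b_n$, and $h_0(\vecx; N_n) = a_n = \nu_S(N_n)$.

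The claimed equivalence is now immediate. If $b_n/a_n \to 0$, then for this $\vecx$ we have $h_2(\vecx; N_n) = 0$ and $h_1(\vecx; N_n)/\nu_S(N_n) = b_n/a_n \to 0$, so $\{N_n\}$ is lim Cohen--Macaulay. Conversely, if $\{N_n\}$ is lim Cohen--Macaulay, then by the cited result of \cite{BHM} recalled in Section~2, $\lim_n h_i(\vecx; N_n)/\nu_S(N_n) = 0$ for \emph{every} system of parameters and all $i \geq 1$, in particular for our regular system of parameters, which forces $b_n/a_n = h_1(\vecx; N_n)/\nu_S(N_n) \to 0$. Finally, for the lim Ulrich statement, Proposition~\ref{prop:basic} gives $e_S(N_n) = \text{rank}_S(N_n)\cdot e(S) = a_n - b_n$, so that $e_S(N_n)/\nu_S(N_n) = (a_n - b_n)/a_n = 1 - b_n/a_n$; this tends to $1$ precisely when $\{N_n\}$ is lim Cohen--Macaulay, so any lim Cohen--Macaulay sequence of the given type is automatically lim Ulrich.

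The argument has no genuine obstacle; the two places that require care are the appeal to minimality of $\phi_n$ to force $\overline{\phi}_n = 0$ (this is what upgrades $h_1(\vecx;N_n) = b_n$ from an inequality to an equality), and, in the ``only if'' direction, the use of the strengthened form of the lim Cohen--Macaulay property --- valid for all systems of parameters rather than only the one witnessing the definition --- so that the resolution data can be read off from the regular system of parameters we selected.
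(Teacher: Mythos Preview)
Your argument is correct and follows essentially the same route as the paper: choose a regular system of parameters $x_1,x_2$ (so that $(x_1,x_2)S=\m$), read off $h_0(\vecx;N_n)=a_n$, $h_1(\vecx;N_n)=b_n$, $h_2(\vecx;N_n)=0$ from the minimal free resolution, and conclude both the equivalence and the lim Ulrich statement from $e_S(N_n)=a_n-b_n$. Your write-up simply makes explicit the steps the paper leaves implicit (Auslander--Buchsbaum for the shape of the resolution, minimality to force $\overline{\phi}_n=0$, and the appeal to \cite{BHM} for the ``only if'' direction); the one cosmetic point is that Proposition~\ref{prop:basic} is stated for MCM modules, so for $e_S(N_n)=a_n-b_n$ it is cleaner to invoke $e_S(N_n)=\chi(\vecx;N_n)=h_0-h_1+h_2$ directly.
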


\begin{proof}
Let $x, y$ be a regular system of parameters for $S$. We have
\begin{equation*}
    a_n = \nu_S(N_n) = h_0(x,y; N_n)
\end{equation*}
and 
\begin{equation*}
    b_n = h_1(x,y; N_n). 
\end{equation*}

\noindent Then $\{N_n\}$ is lim Cohen--Macaulay over $S$ if and only if 

\begin{equation*}
\lim_{n \to \infty} \frac{h_1(x,y; N_n)}{\nu_S(N_n)}  = \lim_{n \to \infty} \frac{b_n}{a_n} = 0.
\end{equation*}
Moreover, we have
\begin{equation*}
    \lim_{n \to \infty} \frac{e_S(N_n)}{\nu_S(N_n)} = \lim_{n \to \infty} \frac{a_n - b_n}{a_n} = 1. 
\end{equation*}
Thus $\{N_n\}$ is a lim Ulrich sequence for $S.$
\end{proof}

\section{Weakly lim Ulrich sequences do not always exist for local domains}

\begin{theorem}
\label{thm:main}
Let $(R, \m, k)$ be a local domain of dimension $2$. Suppose $R$ has an $S_2$-ification $S$ that is a regular local ring. The following are equivalent: 

\begin{enumerate}[label=(\alph*)]
    \item $R$ has a weakly lim Ulrich sequence.
    \item $R$ has an Ulrich module. 
    \item $S$ is an Ulrich module of $R$. 
    \item For any minimal reduction $I$ of $\m$, we have $IS = \m S.$
\end{enumerate}

\end{theorem}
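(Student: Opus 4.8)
The plan is to close a cycle of implications among (a)–(d), drawing on the machinery already built in Sections 3–4. The equivalences (b) $\Leftrightarrow$ (c) $\Leftrightarrow$ (d) are exactly the content of Theorem \ref{thm:regularulrich}, so there is nothing new to do there: any MCM module over $R$ is of the form $S^{\oplus h}$, and $S^{\oplus h}$ is Ulrich over $R$ iff $S$ is, iff $IS = \m S$ for a (equivalently, any) minimal reduction $I$ of $\m$. The implication (b) $\Rightarrow$ (a) is immediate: an Ulrich module $M$ gives the constant sequence $M_n = M$, which is trivially lim Cohen--Macaulay (the Koszul homology $H_i(\vecx; M)$ vanishes for $i \geq 1$ on a system of parameters since $M$ is MCM) and satisfies $e_R(M_n)/\nu_R(M_n) = 1$. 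So the entire theorem reduces to proving the single nontrivial implication (a) $\Rightarrow$ (b), which is precisely Theorem \ref{thm: thirdsub}.

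For (a) $\Rightarrow$ (b), I would chain together the three results of Section 4. Suppose $R$ has a weakly lim Ulrich sequence $\{M_n\}$. First apply Theorem \ref{thm:torsionreduction}: replacing each $M_n$ by $\oM_n = M_n/C_n$, where $C_n$ is the largest finite-length submodule (so the quotient has no finite-length submodule — note in dimension $2$ the torsion submodule itself has finite length since $R$ is a domain, so $C_n$ is in fact the full torsion submodule), yields a genuine \emph{lim Ulrich} sequence of torsion-free $R$-modules. Second, apply Theorem \ref{thm:torsionfreeoverS} with the given $S_2$-ification $S$ (which is a module-finite extension domain inside $\operatorname{frac}(R)$ with $S/R$ of finite length): the sequence $\{\oM_n S\}$ is then a lim Ulrich sequence of $R$-modules and simultaneously a lim Cohen--Macaulay sequence of $S$-modules. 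Third, since $S$ is a regular local ring of dimension $2$, Theorem \ref{thm:regularlimUlrich} applies: writing the minimal free resolution $0 \to S^{b_n} \to S^{a_n} \to \oM_n S \to 0$, the lim Cohen--Macaulay condition over $S$ forces $\lim_{n} b_n/a_n = 0$.

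The final step is to extract an actual Ulrich module of $R$ from this asymptotic statement. Here I would argue by contradiction using the numerical characterization in Theorem \ref{thm:regularulrich}(d): if $S$ is \emph{not} an Ulrich module of $R$, then for a minimal reduction $I = (\vecx)$ we have $\m S \supsetneq IS$, so $\ell_S(S/IS) > 1$, i.e. $e_R(S) = e_S(S)\cdot \operatorname{rank} = $ something strictly larger than $\operatorname{rank}_R(S) = 1$ times... more carefully: $e_R(\oM_n S) = e_R(M_n)$ is unchanged through both reductions, while over the regular ring $S$ one has $e_S(\oM_n S) = a_n - b_n$ and $\nu_S(\oM_n S) = a_n$; comparing with $e_R(S) \cdot \operatorname{rank}_S(\oM_n S)$ and using $\nu_R(\oM_n S) \leq \nu_R(S)\,\nu_S(\oM_n S) = \nu_R(S)\,a_n$, the lim Ulrich condition $e_R(\oM_n S)/\nu_R(\oM_n S) \to 1$ combined with $b_n/a_n \to 0$ pins the ratio $e_R(S)$ must equal $\nu_R(S)$-worth of data in a way that collapses to $\ell_S(S/IS) = 1$. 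I expect \textbf{this last bookkeeping step — reconciling the $R$-multiplicity/generator count of $\oM_n S$ with its $S$-side free-resolution data to conclude $\m S = IS$ — to be the main obstacle}, since it requires carefully tracking that $\operatorname{rank}_R(S) = 1$ and that $\nu_R(S)$ and $e_R(S)$ enter the inequalities with exactly the right constants; once $\m S = IS$ is forced, Theorem \ref{thm:regularulrich} delivers that $S$ itself is the desired Ulrich module, giving (b) (and indeed (c)).
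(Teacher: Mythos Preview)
Your proposal is correct and follows essentially the same route as the paper: the equivalences (b)$\Leftrightarrow$(c)$\Leftrightarrow$(d) via Theorem~\ref{thm:regularulrich}, (b)$\Rightarrow$(a) trivially, and (a)$\Rightarrow$(b) by chaining Theorems~\ref{thm:torsionreduction}, \ref{thm:torsionfreeoverS}, and \ref{thm:regularlimUlrich} exactly as you describe. The ``bookkeeping'' you flag as the main obstacle is handled in the paper directly (not by contradiction): from the short exact sequence $0\to S^{b_n}\to S^{a_n}\to M_n\to 0$ together with $b_n/a_n\to 0$, one gets $\nu_R(M_n)\sim_{\mathcal M}\nu_R(S)a_n$ and $e_R(M_n)\sim_{\mathcal M}e_R(S)a_n$, whence $e_R(S)/\nu_R(S)=\lim e_R(M_n)/\nu_R(M_n)=1$, so $S$ is Ulrich over $R$.
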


\begin{proof} 
First, (c) and (d) are equivalent by definition. Next (b) and (c) are equivalent by Theorem \ref{thm:regularulrich}. It is clear that (b) implies (a). It remains to show that (a) implies (b). 

Suppose $R$ has a weakly lim Ulrich sequence. Then by Theorems \ref{thm:torsionreduction} and \ref{thm:torsionfreeoverS}, there exists a lim Ulrich sequence $\mathcal{M} = \{M_n\}$ of torsion-free $R$ modules that are also $S$ modules. Consider the minimal free resolution 
\begin{equation*}
0 \to S^{b_n} \to S^{a_n} \to M_n \to 0
\end{equation*} 
where $a_n = \nu_S(M_n).$ Now
\begin{equation*}
    \nu_R(S^{b_n}) = b_n\nu_R(S)
\end{equation*}
and 
\begin{equation*}
    e_R(S^{b_n}) = b_n e_R(S).
\end{equation*}
Then Theorem \ref{thm:regularlimUlrich} yields 
\begin{equation*}
   \lim_{n \to \infty} \frac{\nu_R(S^{b_n})}{\nu_R(M_n)} =  \lim_{n \to \infty} \frac{\nu_R(S)b_n}{\nu_R(M_n)} \leq  \lim_{n \to \infty} \frac{\nu_R(S)b_n}{\nu_S(M_n)} = \lim_{n \to \infty} \frac{\nu_R(S)b_n}{a_n} = 0,
\end{equation*}
and 
\begin{equation*}
   \lim_{n \to \infty} \frac{e_R(S^{b_n})}{\nu_R(M_n)} =  \lim_{n \to \infty} \frac{e_R(S)b_n}{\nu_R(M_n)} \leq  \lim_{n \to \infty} \frac{e_R(S)b_n}{\nu_S(M_n)} = \lim_{n \to \infty} \frac{e_R(S)b_n}{a_n} = 0.
\end{equation*}

\vspace{0.5em}
\noindent Consequently, by the minimal free resolution above, we have 
\begin{equation}
\label{eq:first}
    \nu_R(M_n) \sim_{\mathcal{M}} \nu_R(S^{a_n}) = \nu_R(S)a_n,
\end{equation}
and 
\begin{equation}
\label{eq:second}
   e_R(M_n) \sim_{\mathcal{M}} e_R(S^{a_n}) = e_R(S)a_n.
\end{equation} 
Combining equivalences \ref{eq:first} and \ref{eq:second}, we have
\begin{equation*}
\frac{e_R(S)}{v_R(S)} =
\lim_{n \to \infty} \frac{e_R(S)}{v_R(S)} = \lim_{n \to \infty} \frac{e_R(M_n)}{v_R(M_n)} =  1.
\end{equation*}
Thus $S$ is an Ulrich module of $R$. 
\end{proof}

\begin{theorem} 
Weakly lim Ulrich sequences do not always exist for (complete) local domains. 
\end{theorem}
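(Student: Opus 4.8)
The plan is to combine the machinery developed throughout the paper with one of the explicit constructions from Section 3. By Theorem~\ref{thm:main}, if $(R,\m,k)$ is a two-dimensional local domain whose $S_2$-ification $S$ is a regular local ring, then $R$ has a weakly lim Ulrich sequence if and only if $IS = \m S$ for a (equivalently, any) minimal reduction $I$ of $\m$. So it suffices to exhibit a single complete local domain of dimension $2$ with regular $S_2$-ification for which this equality fails.

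First I would invoke Theorem~\ref{thm:genclasscounter} (or Counterexample~\ref{thm:ulrichdoesnotexist}) in dimension $d = 2$: take $S = k[\![x,y]\!]$, choose a system of parameters $u_1, u_2$ whose ideal $I = (u_1,u_2)S$ is not integrally closed, pick $f \in \overline{I} \setminus I$, and set $R = k[\![u_1,u_2]\!][f][v_jx_j, w_jx_j]_{j=1,2}$ as in that theorem. As established in the proof of Theorem~\ref{thm:genclasscounter}, $R$ is a complete local domain of dimension $2$, its $S_2$-ification is exactly $S = k[\![x,y]\!]$ (which is regular), and $(u_1,u_2)$ generates a minimal reduction of $\m_R$. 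The concrete instance from Counterexample~\ref{thm:ulrichdoesnotexist}, namely $\widehat{R} = k[\![x^n, x^{n+1}, x^ny, y^n, y^{n+1}, xy^n, xy]\!]$ with $n \geq 2$, works and has all of these properties checked there.

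Next I would observe that the minimal reduction $I$ does not satisfy $IS = \m S$: in the construction, $f \in \m S$ but $f \notin IS = (u_1,u_2)S$, precisely because $f$ was chosen in $\overline{I} \setminus I$. (In the explicit example, $I = (xy, x^n - y^n)R$ and $IS \neq (x^n,\ldots,xy)S$ as computed in Counterexample~\ref{thm:ulrichdoesnotexist}.) Hence condition (d) of Theorem~\ref{thm:main} fails, so condition (a) fails, i.e.\ $R$ has no weakly lim Ulrich sequence. Since $R$ is a complete local domain, this proves the statement. There is essentially no obstacle here: the theorem is a direct corollary of Theorem~\ref{thm:main} applied to the already-constructed counterexamples, and the only thing to be careful about is that the same rings that witnessed the failure of Ulrich modules in Section 3 also satisfy the hypothesis of Theorem~\ref{thm:main} (two-dimensional, regular $S_2$-ification), which they do by construction.
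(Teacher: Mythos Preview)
Your proposal is correct and follows exactly the paper's approach: the paper's proof is the single sentence ``This is immediate by Theorem~\ref{thm:genclasscounter} by taking $d = 2$ and applying Theorem~\ref{thm:main},'' which is precisely what you do, with additional (and accurate) detail spelled out.
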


\begin{proof}
This is immediate by Theorem \ref{thm:genclasscounter} by taking $d = 2$ and applying Theorem \ref{thm:main}. 
\end{proof}

\begin{corollary}[Localization]
Weakly lim Ulrich sequences do not always localize for local domains. More precisely, there exist local domains $(R, \m, k)$ that have a weakly lim Ulrich sequence $\{M_n\}$ and a prime ideal $\mathfrak{p}$ such that $\{(M_n)_{\mathfrak{p}}\}$ is not a weakly lim Ulrich sequence for $R_{\mathfrak{p}}.$ Moreover, there exist local domains that have weakly lim Ulrich sequences and a prime ideal $\mathfrak{p}$ such that $R_{\mathfrak{p}}$ has no weakly lim Ulrich sequences.
\end{corollary}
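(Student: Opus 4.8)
The plan is to recycle the ring $T$ and the prime $\mathfrak{p}$ from Counterexample \ref{ex:counter}. There we exhibited a local domain $T$ (with $k$ infinite, as throughout the paper) that has an Ulrich module $M$, together with a prime $\mathfrak{p}$ for which the localization $T_{\mathfrak{p}}$ is, after renaming the generators $x/s, y/s$ and passing to the residue field $k(s^{n+1})$, exactly the ring of Counterexample \ref{thm:ulrichdoesnotexist}: a two-dimensional local domain whose $S_2$-ification is a regular local ring and which has no Ulrich module. So both $T$ and $T_{\mathfrak{p}}$ are already on the table; what remains is to package this in the language of weakly lim Ulrich sequences.

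First I would check that the constant sequence $\mathcal{M} = \{M_n\}$ with $M_n = M$ for every $n$ is a weakly lim Ulrich sequence over $T$. Since $M$ is MCM over the domain $T$, any system of parameters $\vecx$ of $\m$ is a regular sequence on $M$, so $H_i(\vecx; M) = 0$ for all $i \geq 1$; hence $h_i(\vecx; M) = 0$ and $\chi_1(\vecx; M) = 0$, and together with $e_T(M)/\nu_T(M) = 1$ this shows $\{M_n\}$ is in fact a (genuine) lim Ulrich sequence, in particular weakly lim Ulrich. Next I would localize at $\mathfrak{p}$: because $M$ is torsion-free over the domain $T$, the module $(M_n)_{\mathfrak{p}} = M_{\mathfrak{p}}$ is a nonzero torsion-free $T_{\mathfrak{p}}$-module of dimension $\dim T_{\mathfrak{p}} = 2$, so $\{(M_n)_{\mathfrak{p}}\}$ is an honest sequence of the type for which the definition of weakly lim Ulrich makes sense.

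Now I would invoke Theorem \ref{thm:main} for the ring $T_{\mathfrak{p}}$, which is a two-dimensional local domain with regular $S_2$-ification: since by Counterexample \ref{thm:ulrichdoesnotexist} it has no Ulrich module, condition (b) of Theorem \ref{thm:main} fails, hence condition (a) fails too, i.e.\ $T_{\mathfrak{p}}$ has \emph{no} weakly lim Ulrich sequence at all. This simultaneously gives both assertions of the corollary: the constant sequence $\{M_n\}$ is a weakly lim Ulrich sequence of $T$ whose localization $\{(M_n)_{\mathfrak{p}}\}$ is not a weakly lim Ulrich sequence of $T_{\mathfrak{p}}$ (the first claim), and $T_{\mathfrak{p}}$ admits no weakly lim Ulrich sequence whatsoever (the ``moreover'' clause).

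The one point deserving care — the only real ``obstacle,'' and a minor one — is ensuring that the localized sequence $\{(M_n)_{\mathfrak{p}}\}$ consists of nonzero modules of the correct dimension, so that the failure to be weakly lim Ulrich is a genuine failure of localization and not a vacuous one; this is exactly what torsion-freeness of $M$ over the domain $T$ provides. Beyond that, the proof is a direct concatenation of the structural facts already recorded in Counterexamples \ref{thm:ulrichdoesnotexist} and \ref{ex:counter} with the equivalence in Theorem \ref{thm:main}, so no new computation is needed.
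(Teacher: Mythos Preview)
Your proposal is correct and follows essentially the same route as the paper: reuse the pair $(T,\mathfrak{p})$ from Counterexample~\ref{ex:counter}, observe that $T$ has a weakly lim Ulrich sequence while $T_{\mathfrak{p}}$ is a two-dimensional local domain with regular $S_2$-ification and no Ulrich module, and then invoke Theorem~\ref{thm:main} to conclude that $T_{\mathfrak{p}}$ has no weakly lim Ulrich sequence at all. The paper's proof is the one-line version of exactly this, with the additional specification that $k$ be perfect of characteristic $p>0$; your more explicit version, using the constant sequence built from the Ulrich module $M$ already supplied in Counterexample~\ref{ex:counter}, works without that restriction and makes the non-vacuity check for $\{(M_n)_{\mathfrak{p}}\}$ explicit.
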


\begin{proof}
This is immediate by taking $k$ to be perfect and $\text{char}(k) >0$ in Counterexample \ref{ex:counter} and applying Theorem \ref{thm:main}.
\end{proof}


\end{document}